\definecolor{indigo}{rgb}{0.29, 0.0, 0.51}  
\theoremstyle{plain}
\newtheorem{theorem}{Theorem}
\newtheorem{corollary}[theorem]{Corollary}
\newtheorem{proposition}[theorem]{Proposition}
\newtheorem{lemma}[theorem]{Lemma}
\newtheorem{problem}[theorem]{Problem}
\theoremstyle{definition}
\newtheorem{definition}[theorem]{Definition}
\theoremstyle{remark}
\newtheorem{remark}[theorem]{Remark}
\numberwithin{theorem}{section}
\newcommand{\R}{\mathbb{R}}           
\newcommand{\Z}{\mathbb{Z}}           
\newcommand{\NS}{{\mathbb{S}}}
\newcommand{\D}{{\mathbb{D}}}
\newcommand{\Op}{{\mathcal{O}p}}
\renewcommand{\L}{\mathcal{L}}           
\newcommand{\FL}{\mathcal{FL}}           
\newcommand*\bigcdot{\mathpalette\bigcdot@{0.6}}
\newcommand*\bigcdot@[2]{\mathbin{\vcenter{\hbox{\scalebox{#2}{$\m@th#1\bullet$}}}}}
\DeclareFontFamily{U} {cmr}{}
\DeclareFontShape{U}{cmr}{m}{n}{
  <-6> cmr5
  <6-7> cmr6
  <7-8> cmr7
  <8-9> cmr8
  <9-10> cmr9
  <10-12> cmr8
  <12-> cmr9}{}
\DeclareSymbolFont{Xcmr} {U} {cmr}{m}{n}
\DeclareMathSymbol{\Phi}{\mathord}{Xcmr}{8}
\newcommand{\Diff}{\operatorname{Diff}}
\newcommand{\rel}{\operatorname{rel}}
\newcommand{\Maps}{\operatorname{Maps}}
\newcommand{\Id}{{\operatorname{Id}}}
\newcommand{\rot}{\operatorname{rot}}
\newcommand{\tb}{\operatorname{tb}}
\newcommand{\Cont}{\operatorname{Cont}}
\newcommand{\FCont}{\operatorname{FCont}}
\newcommand{\CFr}{\operatorname{CFr}}
\newcommand{\Germs}{\operatorname{Germs}}
\newcommand{\pr}{\operatorname{pr}}
\newcommand{\image}{\operatorname{Image}}
\newcommand{\C}{\operatorname{C}}
\newcommand{\FC}{\operatorname{FC}}
\newcommand{\OT}{\operatorname{OT}}
\newcommand{\Emb}{{\operatorname{Emb}}}
\newcommand{\FEmb}{{\operatorname{FEmb}}}
\newcommand{\FDiff}{{\operatorname{FDiff}}}
\newcommand{\F}{{\operatorname{\mathcal{F}}}}
\newcommand{\Germ}{\operatorname{Germ}}
\begin{document} 

\title{Strongly overtwisted contact $3$-manifolds}

\author{Eduardo Fern\'{a}ndez}
\address{Department of Mathematics\\ University of Georgia\\ Athens\\ GA, USA}
\email{eduardofernandez@uga.edu}


\maketitle
{\centering\footnotesize \emph{Dedicated to Francisco Presas}\par}

\begin{abstract}
We prove the existence of a subclass of overtwisted contact structures, called strongly overtwisted, on a $3$-manifold that satisfy a complete $h$-principle without prescribing the contact structures over any subset of the $3$-manifold. As a consequence, the homotopy type of the space of overtwisted disk embeddings into a strongly overtwisted contact $3$-manifold is determined. A complete $h$-principle for a subclass of loose Legendrians is also derived from the main result. In general, the method allows us to deduce an $h$-principle for overtwisted disks that are fixed near the boundary in an arbitrary overtwisted contact $3$-manifold.
\end{abstract}


\section{Introduction}\label{sec:intro}

In recent years, several classification results and gluing type theorems have appeared for families of tight contact structures on $3$-manifolds \cite{EliashbergMishachev:Tight, FMP:Tight, FMin:Cables, FM:Dehn}. On the contrary, our understanding of \em families \em of overtwisted contact structures on $3$-manifolds has remained limited. The reason is that the $h$-principle of Eliashberg \cite{Eliashberg:OT} is parametric only when an overtwisted disk is assumed to be \em fixed. \em In fact, prior this article, the only known classification result for families of overtwisted contact structures was for $\NS^1$-families of overtwisted contact structures on $\NS^3$; due, independently, to Chekanov and Vogel \cite{Vogel:OTDisk}; and implies that \em rigidity \em could arise when studying families of overtwisted contact structures. In particular, the condition about the fixed overtwisted disk in \cite{Eliashberg:OT} cannot be removed in general. Therefore, obtaining a description of the homotopy type of the space of contact structures isotopic to a fixed overtwisted contact structure is an open problem since the appearence of \cite{Eliashberg:OT}. An equivalent problem appeared in the Problem List on Low Dimensional Contact Topology from the 2001 International Georgia Topology Confence 
\begin{problem}[Problem 31 from \cite{ContactProblemList}] \label{problem}
    Determine the group $\Cont(M,\xi)$ for $(M,\xi)$ an overtwisted $3$-manifold. 
\end{problem}

In this article, we define an infinite subclass of overtwisted contact $3$-manifolds, which we call \em strongly overtwisted. \em This subclass satisfies a complete $h$-principle without prescribing the contact structures over any subset of the $3$-manifolds. This allows us to describe the homotopy type of the space of strongly overtwisted contact structures on a fixed $3$-manifold. It also implies a complete $h$-principle for the contactomorphism group of a strongly overtwisted contact $3$-manifold, solving Problem \ref{problem} for every strongly overtwisted contact $3$-manifold. 

For a general overtwisted contact $3$-manifold, the method allows us to prove a parametric $h$-principle for overtwisted contact structures that coincide near the \em boundary \em of an overtwisted disk, relaxing the fixed overtwisted disk condition from \cite{Eliashberg:OT}. In particular, a parametric $C^0$-dense $h$-principle for overtwisted disks \em fixed near the boundary \em in any overtwisted contact $3$-manifold is established. 

\subsection{Main results}

Throughout this article, every $3$-manifold $M$ is oriented, and every contact structure $\xi$ on $M$ is assumed to be positive and co-oriented. 

\subsubsection{Strongly overtwisted contact $3$-manifolds}

Define the \em negative part of the Euler characteristic \em of a closed and orientable surface $\Sigma$ to be $$\chi_{-}(\Sigma)=\max(0,-\chi(\Sigma)).$$

\begin{definition} \noindent
    \begin{itemize} 
    \item [(i)] A closed, connected and orientable embedded surface $\Sigma\subseteq (M,\xi)$ is said to be a \em strongly overtwisted surface \em if 
     $|e(\xi)[\Sigma]|>\chi_{-} (\Sigma). $
    \item [(ii)] A compact, connected and orientable properly embedded surface $\Sigma\subseteq (M,\xi)$ that is convex near the connected Legendrian boundary $\partial \Sigma\subseteq \partial M$ is said to be \em strongly overtwisted \em if $\tb(\partial \Sigma)\leq 0$ and $\tb(\partial\Sigma)+|\rot_{\Sigma}(\partial \Sigma)|>-\chi(\Sigma)$.
    \item [(iii)] A contact $3$-manifold $(M,\xi)$ is said to be \em strongly overtwisted \em if it contains an embedded strongly overtwisted surface $\Sigma\subseteq (M,\xi)$. The surface $\Sigma$ is required to be properly embedded if $\partial \Sigma\neq \emptyset$. 
    \end{itemize}
\end{definition}

If $(M,\xi)$ is a closed strongly overtwisted contact $3$-manifold, the Euler class $e(\xi)\in H^2(M,\Z)$ is non torsion. Therefore, neither $\NS^3$ nor all rational homology $3$-spheres admit a strongly overtwisted contact structure. 

On the other hand, the class of strongly overtwisted contact $3$-manifolds is stable under contact surgery along isotropic submanifolds with strongly overtwisted complement. For instance, the contact connected sum of a contact manifold with a strongly overtwisted contact manifold is strongly overtwisted. 

Natural examples of strongly overtwisted contact $3$-manifolds are $\NS^1$-invariant neighborhoods of convex strongly overtwisted surfaces. More abstractly, if $(M,\eta)$ is a closed $3$-manifold $M$ equipped with a co-oriented plane field $\eta$, and $\Sigma\subseteq M$ is an embedded, connected and oriented surface such that $|e(\eta)[\Sigma]|>\chi_{-}(\Sigma)$; then the $h$-principle for overtwisted contact structures \cite{Eliashberg:OT} implies the existence of a strongly overtwisted contact structure $\xi$ on $M$, which is homotopic to $\eta$ as a plane field. In other words, strongly overtwisted contact structures exist abundantly as long as there are not obvious homotopical obstructions.

The motivation behind the previous definition rests on the following
\begin{theorem}[Bennequin-Eliashberg Inequalities]\label{thm:EliashbergBound}
    Every open neighborhood of a strongly overtwisted surface is overtwisted. In particular, every strongly overtwisted contact $3$-manifold is overtwisted.
\end{theorem}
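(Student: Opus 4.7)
The plan is to argue by contrapositive. Assume some open neighborhood $U \supseteq \Sigma$ is tight; we will derive the opposite of the defining inequality of \emph{strongly overtwisted}, which in both cases (i) and (ii) is precisely a classical Bennequin--Eliashberg inequality applied to a convex representative of $\Sigma$ inside $U$.

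For a closed strongly overtwisted surface, a generic $C^\infty$-small isotopy of $\Sigma$ inside $U$ makes it convex. Let $\Gamma \subset \Sigma$ be its dividing set, inducing a decomposition $\Sigma = \Sigma_+ \cup \Sigma_-$. Giroux's formula reads
\[ e(\xi)[\Sigma] = \chi(\Sigma_+) - \chi(\Sigma_-), \]
and Giroux's tightness criterion, applied to $U$, forces $\Gamma$ to have no contractible component, except when $\Sigma \cong \NS^2$ (where $\Gamma$ must be connected, so that $e(\xi)[\Sigma]=0$). Consequently every component of $\Sigma_\pm$ has non-positive Euler characteristic once $\Sigma \not\cong \NS^2$, hence
\[ |e(\xi)[\Sigma]| \le -\chi(\Sigma_+)-\chi(\Sigma_-) = -\chi(\Sigma) = \chi_{-}(\Sigma). \]
Together with the $\NS^2$ and $T^2$ cases (where $\chi_{-}=0$ and any convex representative in a tight domain gives $e(\xi)[\Sigma]=0$), this contradicts the strict inequality $|e(\xi)[\Sigma]| > \chi_{-}(\Sigma)$.

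For case (ii), $\Sigma$ is already convex near the Legendrian boundary $L = \partial \Sigma$; a relative $C^\infty$-small perturbation inside $U$ makes all of $\Sigma$ convex while fixing a neighborhood of $L$. Since $L$ is connected with $\tb(L) \le 0$, the dividing set meets $L$ in $-2\tb(L)$ points, and the relative Giroux formula reads $\rot_\Sigma(L) = \chi(\Sigma_+) - \chi(\Sigma_-)$. Tightness of $\xi|_U$ combined with Honda's bypass analysis and the Legendrian Realization Principle forbids components of $\Sigma \setminus \Gamma$ that are half-disks cobounded by $L$ and a boundary-parallel dividing arc, which is exactly the relative Bennequin--Eliashberg bound
\[ \tb(L) + |\rot_\Sigma(L)| \le -\chi(\Sigma), \]
contradicting the hypothesis.

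The main obstacle is bookkeeping two standard but delicate points: the low-complexity corners of Giroux's criterion ($\NS^2$ and $T^2$, where $\chi_{-}=0$ and one has to exploit connectedness of $\Gamma$ rather than Euler-characteristic estimates), and running the relative convex surface theory for a merely properly embedded $\Sigma$ with Legendrian boundary on $\partial M$ rather than a Seifert surface of a knot in a closed manifold. Once those are in place, the implication reduces to identifying the strong overtwistedness inequality as the strict violation of the appropriate Bennequin--Eliashberg inequality, from which the overtwistedness of \emph{every} open neighborhood of $\Sigma$ is immediate, since the convexification can be performed in an arbitrarily small tubular neighborhood.
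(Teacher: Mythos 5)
Your proposal is correct and takes essentially the same route as the paper: argue by contrapositive, convexify $\Sigma$ inside the given neighborhood (using $\tb(\partial\Sigma)\le 0$ and the convexity near $\partial\Sigma$ to justify the relative perturbation in the boundary case), and then invoke the Giroux tightness criterion together with the Bennequin--Eliashberg bound to contradict the defining inequality of strong overtwistedness. The only cosmetic difference is that the paper cites Eliashberg's Euler-class bound for tight structures as a black box in the closed case, whereas you re-derive it via Giroux's formula $e(\xi)[\Sigma]=\chi(\Sigma_+)-\chi(\Sigma_-)$ and the dividing-set analysis; this is a standard unpacking of the same result.
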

The case where $\Sigma$ is closed was proved by Eliashberg in \cite{Eliashberg:Twenty}. The non-empty boundary case follows from the Bennequin-Eliashberg inequality \cite{Bennequin,Eliashberg:Twenty} coupled with convex surface theory \cite{Giroux:Convexity,Giroux:Tightness}. Here, we are making use of the condition about the Thurston-Bennequin number in a crucial way. Indeed, there are non-loose Legendrians that violate the Bennequin-Eliashberg inequality \cite{EliashbergFraser}. See Theorem \ref{thm:OTatInfinity} for further details.

\begin{remark}
 It follows from Theorem \ref{thm:EliashbergBound} that the Euler class of a strongly overtwisted contact structure on a closed $3$-manifold $M$ cannot be realized as the Euler class of a tight contact structure. Even more, only finitely many elements of $H^2(M,\Z)$ (that are always even) can be realized as the Euler class of a \em not \em strongly overtwisted contact structure \cite{Eliashberg:Twenty}.
\end{remark}

\subsubsection{The $h$-principle for strongly overtwisted contact structures}

Let $(M,\xi)$ be a compact contact $3$-manifold. A formal contact structure on $M$ is a co-oriented plane field on $M$. If $M$ has non-empty boundary, we require that every formal contact structure agrees with $\xi$ near the boundary. We denote by $\FC(M;\xi)$ the component of the space of such formal contact structures on $M$ that contains $\xi$. Similarly, $\C(M;\xi)\subseteq \FC(M;\xi)$ stands for the subspace of contact structures. Our main result is a complete $h$-principle for strongly overtwisted contact structures:

\begin{theorem}\label{thm:Main}
    Let $(M,\xi)$ be a compact strongly overtwisted contact $3$-manifold. The inclusion 
        \[ i_{\C}:C(M;\xi)\hookrightarrow \FC(M;\xi)\] is a weak homotopy equivalence.
\end{theorem}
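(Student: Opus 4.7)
The plan is to reduce Theorem~\ref{thm:Main} to Eliashberg's parametric $h$-principle, which requires a fixed overtwisted disk, by using the strongly overtwisted surface $\Sigma\subseteq(M,\xi)$ as a canonical source of overtwisted disks that persists in families of formal contact structures. Since the inequality $|e(\eta)[\Sigma]|>\chi_{-}(\Sigma)$ is $C^{0}$-open in the space of co-oriented plane fields $\eta$, for any compact family of formal contact structures extending $\xi$ near $\partial M$ the surface $\Sigma$ remains formally strongly overtwisted throughout the family. By Theorem~\ref{thm:EliashbergBound}, the corresponding condition on a genuine contact structure forces the presence of overtwisted disks in a neighborhood of $\Sigma$.

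The argument would proceed in three steps. First, I would establish a fully-parametric local $h$-principle on an $\NS^{1}$-invariant neighborhood $U=N(\Sigma)$: for every family $\eta_{t}$, $t\in D^{k}$, of formal contact structures on $U$ with the prescribed boundary behavior and with $\Sigma$ formally strongly overtwisted, produce a continuous family $(\xi_{t},\Delta_{t})$ where $\xi_{t}$ is a genuine contact structure on $U$ in the same formal class as $\eta_{t}$ and $\Delta_{t}\subset U$ is an embedded overtwisted disk. On an $\NS^{1}$-invariant neighborhood the contact structure is encoded by a dividing set on $\Sigma$, and the Euler-class bound forces the dividing set to admit a contractible component whose bounded disk yields $\Delta_{t}$ after Legendrian realization, enabling the choice to be made continuously in $t$. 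Second, I would apply Eliashberg's parametric $h$-principle relative to the moving family of fixed overtwisted disks $\Delta_{t}$, extending $\xi_{t}$ from $U$ to all of $M$ in the formal class of $\eta_{t}$. Third, I would handle the relative condition by arranging that whenever $\eta_{t}$ is already equal to $\xi$ on $U$ the local construction returns $\xi$ itself (rel $\partial D^{k}$), yielding the desired isomorphism of homotopy groups.

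The main obstacle is the first step, namely the \emph{continuous dependence} of $(\xi_{t},\Delta_{t})$ on $\eta_{t}$ with no \emph{a priori} fixed overtwisted disk: this is precisely the feature that Eliashberg's original $h$-principle lacks. I expect this to require an explicit contact-geometric model along $\Sigma$, in the spirit of an inverse Lutz twist, built from the equivalence between $\NS^{1}$-invariant contact structures and their dividing data, together with a careful cutoff that collapses to the identity when $\eta_{t}=\xi$. Once such a local parametric model is available, the remaining globalization is a relatively standard application of Eliashberg's fixed-disk $h$-principle, and the boundary case $\partial \Sigma\neq\emptyset$ is handled analogously using convex surface theory near the Legendrian boundary of $\Sigma$.
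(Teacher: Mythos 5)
Your Step 1 is not a reduction of the problem; it \emph{is} the problem, and the sketch you give of it does not go through. For a family $\eta_t$, $t\in \D^k$, of \emph{formal} contact structures you want a continuous choice of genuine contact structures $\xi_t$ on a neighborhood $U$ of $\Sigma$, in the formal class of $\eta_t|_U$, together with overtwisted disks $\Delta_t$. But a formal contact structure has no dividing set, so the proposed mechanism ($\NS^1$-invariant models encoded by dividing data, a contractible dividing curve forced by the Euler-class bound, Legendrian realization) only becomes available after you already have genuine contact structures in the correct formal class, \emph{parametrically} -- that is, after you have proved a parametric $h$-principle on $U$ with no fixed overtwisted disk, which is essentially the statement being proved (and the Chekanov--Vogel rigidity shows that such a statement cannot come from softness alone; it must use the strong overtwistedness in a structural way). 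You acknowledge that this continuous production of $(\xi_t,\Delta_t)$ is the main obstacle and offer only the expectation of an ``inverse Lutz twist model''; that is precisely where the proof is missing. There are also secondary unaddressed points: Theorem \ref{thm:EliashbergOT} is relative to a \emph{fixed} disk with a fixed germ, so your Step 2 with a moving family $\Delta_t$ and varying germs needs an extra normalization argument; and in Step 3 the correct relative condition is that the construction restitute $\eta_t$ (up to a canonical homotopy) whenever $\eta_t$ is already contact, for $t\in\partial\D^k$, not merely when $\eta_t=\xi$ on $U$.

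The paper's key idea, absent from your proposal, is to avoid choosing overtwisted disks in the region where the family varies altogether. It passes to contactomorphisms via the Gray fibration (Lemma \ref{lem:Gray}), proves a parametric $C^0$-dense $h$-principle for embeddings of $\Sigma$ with fixed characteristic foliation (Theorem \ref{thm:H-PrincipleOvertwistedSurfaces}, via the microfibration trick), fixes $\Sigma$ by contact isotopy extension, and then works in the complement of $\Sigma$ (respectively, in a vertically invariant neighborhood $\Sigma\times(-1,1)$ for the local problem), which is overtwisted \emph{at infinity} by Theorem \ref{thm:OTatInfinity}: push-offs of $\Sigma$ are again strongly overtwisted, so the Bennequin--Eliashberg inequality together with Giroux's criterion produces overtwisted disks arbitrarily close to the ends, i.e.\ inside the region where every structure in the family is constant and equal to $\xi$. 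Eliashberg's fixed-disk $h$-principle then applies as a black box with the disk sitting in that fixed region, and no continuous family of disks in the varying region is ever needed. Your Euler-class observation is indeed why $\Sigma$ forces overtwistedness (Theorem \ref{thm:EliashbergBound}), but by itself it does not supply the continuity in $t$ on which your plan hinges.
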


\begin{remark}
The difference between the main result in \cite{Eliashberg:OT}, see Theorem \ref{thm:EliashbergOT} below, and Theorem \ref{thm:Main} is that the restriction of being fixed near an overtwisted disk is dropped. In particular, contrary to \cite{Vogel:OTDisk}, rigidity for families of strongly overtwisted contact structres does not appear.
\end{remark}
\begin{remark}
 After fixing a trivialization of $TM\cong M\times \R^3$, the assignment 
\[ \FC(M,\xi)\rightarrow \Maps_{\xi}(M,\NS^2),\hat{\xi}\mapsto f_{\hat{\xi}}; \]
which sends every contact structure to its Gauss map is a homotopy equivalence. Here, the subscript $\xi$ on the space of maps denotes the path-connected component of the Gauss map $f_{\xi}$ in the space $\Maps(M,\NS^2)$ of maps from $M$ to $\NS^2$ that coincide with $f_{\xi}$ near $\partial M$. Therefore, Theorem \ref{thm:Main} reduces the study of families of strongly overtwisted contact structures to a problem that can be treated by obstruction theoretic means. 
\end{remark}

\subsubsection{The $h$-principle for the contactomorphism group of a strongly overtwisted contact $3$-manifold}
We will deduce Theorem \ref{thm:Main} from the analog $h$-principle for contactomorphisms which solves Problem \ref{problem} for every strongly overtwisted contact $3$-manifold. In the result below, $\FCont(M,\xi)$ denotes the group of formal contactomorphisms (see Definition \ref{def:FormalContactomorphism}) that coincides with the identity near $\partial M$ and $\Cont(M,\xi)\subseteq \FCont(M,\xi)$ denotes the contactomorphism group of $(M,\xi)$. 

\begin{theorem}\label{thm:MainContactomorphisms}
Let $(M,\xi)$ be a compact strongly overtwisted contact $3$-manifold. Then, the natural inclusion \[ i_{\Cont}:\Cont(M,\xi)\hookrightarrow \FCont(M,\xi) \]
is a weak homotopy equivalence.
\end{theorem}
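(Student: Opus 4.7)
The plan is to reduce Theorem \ref{thm:MainContactomorphisms} to Eliashberg's parametric $h$-principle \cite{Eliashberg:OT} for overtwisted contact structures that coincide near a fixed overtwisted disk, using the strong overtwistedness hypothesis to align such a disk across a parameter family. Given a compact relative family $\{\Phi_s\}_{s\in D^k}\subset \FCont(M,\xi)$ that is genuine on $\partial D^k$, the goal is to homotope it rel $\partial D^k$, through formal contactomorphisms, to a family of genuine contactomorphisms.

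Fix a strongly overtwisted surface $\Sigma\subset (M,\xi)$ and an overtwisted disk $D\subset \nu(\Sigma)$ furnished by Theorem \ref{thm:EliashbergBound}. The inequality $|e(\xi)[\Sigma]|>\chi_{-}(\Sigma)$ depends only on the homotopy class of $\xi$ as a plane field, so each translate $\Phi_s(\Sigma)$ remains strongly overtwisted in $(M,\xi)$. The key intermediate step is a parametric flexibility statement for the space of strongly overtwisted surfaces embedded in $(M,\xi)$: any compact relative family of such surfaces can be isotoped, through a family of ambient genuine contact isotopies of $(M,\xi)$, to the constant family at $\Sigma$. Granting this, composition with the alignment isotopy yields a homotopic family $\tilde\Phi_s$ of formal contactomorphisms that is genuinely contact on a neighborhood of $D$.

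At this point, the pulled-back contact structures $\tilde\Phi_s^{\,*}\xi$ all coincide with $\xi$ near $D$ and with $\xi$ itself for $s\in\partial D^k$; the parametric $h$-principle of \cite{Eliashberg:OT} then supplies a family of contact isotopies from $\xi$ to $\tilde\Phi_s^{\,*}\xi$ rel $D$ and rel $\partial D^k$, and parametric Gray stability converts this into the desired family of genuine contactomorphisms, homotopic to $\{\Phi_s\}$ through $\FCont(M,\xi)$ via the concatenation of the Gray and alignment isotopies. The principal obstacle is the alignment step: Chekanov and Vogel show that overtwisted disks cannot in general be aligned in families even in $\NS^3$, so the strict inequality $|e(\xi)[\Sigma]|>\chi_{-}(\Sigma)$ must be exploited in an essential way. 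The expected mechanism is that the strictness provides enough slack on the characteristic foliation and dividing set of $\Sigma$ to perform the convex-surface-theoretic manipulations (bypass creation, stabilization) needed to move strongly overtwisted surfaces freely in compact families, circumventing the rigidity obstruction of \cite{Vogel:OTDisk}.
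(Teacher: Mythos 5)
Your high-level architecture matches the paper's: isolate an overtwisted disk $D$ near a strongly overtwisted surface $\Sigma$, homotope the formal family $\{\Phi_s\}$ to become genuinely contact near $D$, then apply Eliashberg's parametric $h$-principle rel $D$ and Gray stability. You also correctly identify the bottleneck as a parametric flexibility result for strongly overtwisted surfaces; this is exactly Theorem~\ref{thm:H-PrincipleOvertwistedSurfaces}. However, there are two substantive problems with how you propose to fill in the alignment step.

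First, a logical gap: your ``parametric flexibility statement'' is phrased as moving a compact family of strongly overtwisted \emph{smooth surfaces} to the constant family by ambient \emph{genuine} contact isotopies. Even granting that, composing $\Phi_s$ with such an isotopy $\psi_s$ gives $\tilde\Phi_s=\psi_s\circ\Phi_s$ with $\tilde\Phi_s(\Sigma)=\Sigma$, but $\tilde\Phi_s$ is still only a \emph{formal} contactomorphism; it does not follow that $\tilde\Phi_s^{\,*}\xi=\xi$ on a neighborhood of $D$. Set-theoretic alignment of $\Phi_s(\Sigma)$ with $\Sigma$ is strictly weaker than what you need, namely that $\Phi_s$ preserve the contact germ along $\Sigma$. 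The paper handles this by working with germs of formal vs.\ genuine \emph{isocontact $\F$-embeddings}, i.e.\ the data of the surface together with its vertically invariant contact neighborhood, and then separating the argument into (i) deforming the family of formal contactomorphisms so they become genuinely contact in a neighborhood of $i(\Sigma)$ (Theorem~\ref{thm:H-PrincipleOvertwistedSurfaces}, which is an $h$-principle in $\FEmb^{\F,\Germ}$, not an ambient-isotopy statement), and only then (ii) aligning via the contact isotopy extension theorem for surfaces with fixed characteristic foliation. Your proposal collapses (i) and (ii) into one ambient-isotopy step, which cannot work as stated.

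Second, the mechanism you propose for the flexibility statement --- that the strict inequality gives ``slack on the characteristic foliation and dividing set'' enabling family bypass creation/stabilization --- is not how the paper proceeds and is doubtful, since convex surface moves are notoriously difficult to run coherently in compact families. The actual mechanism is the Microfibration Trick (Theorem~\ref{thm:microfibration}): reduce the global family statement to a local one over a vertically invariant slab $\Sigma\times(-1,1)$ around a nearby genuine $\F$-embedding, observe that this slab is \emph{overtwisted at infinity} precisely because $\Sigma$ is strongly overtwisted (Theorem~\ref{thm:OTatInfinity}, which is where the Bennequin--Eliashberg inequality enters), and then run Eliashberg's parametric $h$-principle \emph{locally} in that slab coupled with Gray stability. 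Convex surface theory is used only once and pointwise --- to extract an overtwisted disk from a Bennequin--Eliashberg violation --- not as a parametric manipulation tool. So your proof sketch identifies the right intermediate result but misstates it and misidentifies the engine that proves it.
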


Theorems \ref{thm:Main} and \ref{thm:MainContactomorphisms} both follow by combining the $h$-principle for overtwisted contact $3$-manifolds \cite{Eliashberg:OT} with a new parametric and $C^0$-dense $h$-principle for strongly overtwisted surfaces (Theorem \ref{thm:H-PrincipleOvertwistedSurfaces}). This $h$-principle heavily uses Theorem \ref{thm:EliashbergBound}, which is a manifestation of Giroux Tightness Criteria \cite{Giroux:Tightness}. Although there are \em algebraic \em Giroux Criteria in higher dimensions \cite{Avdek}, it remains an open problem to find a higher-dimensional analog of the Giroux Criteria. On the other hand, there are no Bennequin-Eliashberg type inequalities in higher dimensions \cite{CasalsPancholiPresas,HondaHuang,Murphy:Loose}. Therefore, a priori, it seems unlikely that an approach similar to the one followed in this article will work to study families of overtwisted contact structures in higher dimensions \cite{BEM}.

\subsection{Applications}

\subsubsection{General overtwisted contact $3$-manifolds}

Let $(M,\xi)$ be an overtwisted contact $3$-manifold and $\D^2_{\OT}\subseteq (M,\xi)$ an overtwisted disk. Observe that the contact manifold $(M\backslash \Op(\partial\D^2_{\OT}),\xi)$ is strongly overtwisted.

Denote by $\FCont(M;\xi,\rel \partial \D^2_{\OT})\subseteq \FC(M;\xi)$ the subspace of formal contact structures that coincide with $\xi$ near $\partial \D^2_{\OT}$, and by $\C(M;\xi,\rel \partial \D^2_{\OT})=\C(M;\xi)\cap \FCont(M;\xi,\rel \partial \D^2_{\OT})$ the corresponding subspace of contact structures. Theorem \ref{thm:Main} directly implies the following

\begin{corollary}\label{cor:EliashbergSoft}
    Let $(M,\xi)$ be a compact overtwisted contact $3$-manifold and $\D^2_{\OT}\subseteq (M,\xi)$ an overtwisted disk. Then, the inclusion 
    \[ i_{\C}:\C(M;\xi,\rel \partial\D^2_{\OT})\hookrightarrow \FC(M;\xi;\rel \partial \D^2_{\OT}) \]
    is a weak homotopy equivalence.
\end{corollary}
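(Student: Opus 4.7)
The plan is to reduce directly to Theorem \ref{thm:Main} applied to the compact contact submanifold $(M', \xi|_{M'}) := (M \setminus \Op(\partial \D^2_{\OT}), \xi|_{M \setminus \Op(\partial \D^2_{\OT})})$, which the paragraph preceding the corollary asserts to be strongly overtwisted. I first verify that assertion explicitly. By choosing a standard contact collar around the Legendrian $\partial \D^2_{\OT}$, the new torus boundary component $T \subset \partial M'$ can be arranged to be convex and to meet $\D^2_{\OT}$ transversely in a single circle that is a Legendrian pushoff of $\partial \D^2_{\OT}$ along $T$. The truncated disk $\D' := \D^2_{\OT} \cap M'$ is then a properly embedded disk in $(M', \xi|_{M'})$ with Legendrian boundary lying on $T$. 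Since the classical invariants are preserved under Legendrian pushoff,
\[ \tb(\partial \D') = \tb(\partial \D^2_{\OT}) = 0, \qquad |\rot_{\D'}(\partial \D')| = |\rot_{\D^2_{\OT}}(\partial \D^2_{\OT})| = 1, \]
and combined with $\chi(\D') = 1$ this yields $\tb(\partial \D') \leq 0$ and $\tb(\partial \D') + |\rot_{\D'}(\partial \D')| = 1 > -1 = -\chi(\D')$. Hence $\D'$ certifies that $(M', \xi|_{M'})$ is strongly overtwisted, so Theorem \ref{thm:Main} applies.

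Next I identify the relative spaces of structures on $M$ with the corresponding spaces on $M'$. The restriction-extension pair between $M$ and $M'$ provides natural homotopy equivalences
\[ \C(M; \xi, \rel \partial \D^2_{\OT}) \;\simeq\; \C(M'; \xi|_{M'}), \qquad \FC(M; \xi, \rel \partial \D^2_{\OT}) \;\simeq\; \FC(M'; \xi|_{M'}), \]
because every structure on the left is forced to coincide with the germ of $\xi$ along $\partial \D^2_{\OT}$, while any structure on $M'$ fixed near $\partial M' = \partial M \sqcup T$ extends uniquely by $\xi$ over $\Op(\partial \D^2_{\OT})$ to a structure on $M$ belonging to the correct component. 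These equivalences intertwine the inclusion maps $i_\C$ on both sides, so the corollary follows immediately from Theorem \ref{thm:Main} applied to $(M', \xi|_{M'})$.

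The only genuine subtlety is the bookkeeping involved in matching the two notions of "fixed near" used on each side of the restriction-extension correspondence; this amounts to choosing a concrete model for the contact collar of $\partial \D^2_{\OT}$ and is routine. No substantive obstacle arises, consistent with the paper's framing of the corollary as a direct consequence of the main theorem.
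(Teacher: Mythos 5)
Your proposal takes essentially the same route as the paper: the paper's justification is precisely the observation that $(M\setminus\Op(\partial\D^2_{\OT}),\xi)$ is strongly overtwisted (your truncated-disk computation $\tb(\partial\D')=0$, $|\rot_{\D'}(\partial\D')|=1>-\chi(\D')$ is exactly the implicit certificate) followed by a direct application of Theorem \ref{thm:Main} under the extension/restriction identification of the relative spaces of (formal) contact structures. The details you add merely spell out bookkeeping the paper leaves implicit, so the argument matches the intended proof.
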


\begin{remark}
The difference between Corollary \ref{cor:EliashbergSoft} and the $h$-principle in \cite{Eliashberg:OT} is that the fixed overtwisted disk requirement has been relaxed: it is enough to fix the boundary of an overtwisted disk to obtain flexibility.
\end{remark}

Corollary \ref{cor:EliashbergSoft} has a corresponding result for overtwisted disks. Let $i:\D^2_{\OT}\hookrightarrow (M,\xi)$ be the inclusion of some overtwisted disk. Denote by $\FEmb((\Op(\D^2_{\OT}),\xi_{\OT}),(M,\xi))$ the space of formal isocontact embeddings $(\Op(\D^2_{\OT}),\xi_{\OT})\rightarrow (M,\xi)$ that coincide with $i$ near $\partial \D^2_{\OT}$, and let $\Emb_{\partial}((\Op(\D^2_{\OT},\xi_{\OT}),(M,\xi))$ be the subspace of isocontact embeddings. We refer the reader to \ref{subsec:Germs} for more details.

The $h$-principle for strongly overtwisted surfaces (Theorem \ref{thm:H-PrincipleOvertwistedSurfaces}) automatically implies

\begin{theorem}\label{thm:OTFixedBoundary}
The inclusion \[\Emb_{\partial}(\Op(\D^2_{\OT}),\xi_{\OT}),(M,\xi))\hookrightarrow \FEmb_{\partial}((\Op(\D^2_{\OT}),\xi_{\OT}),(M,\xi))\] is a $C^0$-dense weak homotopy equivalence. 
\end{theorem}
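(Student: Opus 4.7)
The plan is to reduce the statement directly to the parametric $C^0$-dense $h$-principle for strongly overtwisted surfaces (Theorem \ref{thm:H-PrincipleOvertwistedSurfaces}) via the standard dictionary between germs of contact structures along a convex surface and the characteristic foliation on the surface. First I would check that the standard overtwisted disk $\D^2_{\OT}$, viewed as a compact surface with Legendrian boundary, is strongly overtwisted in the sense of Definition~(ii): its boundary is Legendrian with $\tb(\partial \D^2_{\OT})=0$ and $\rot_{\D^2_{\OT}}(\partial \D^2_{\OT})=0$, and a neighborhood realizes it as a convex surface, so $\tb(\partial \D^2_{\OT})+|\rot_{\D^2_{\OT}}(\partial\D^2_{\OT})|=0>-1=-\chi(\D^2)$.

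Next I would identify both sides of the inclusion $\Emb_\partial \hookrightarrow \FEmb_\partial$ with the corresponding spaces of (formal) strongly overtwisted disks in $(M,\xi)$ fixed near the boundary. On the genuine side this is a direct consequence of the standard convex-surface neighborhood theorem: any embedding of $\D^2_{\OT}$ into $(M,\xi)$ as a convex strongly overtwisted disk whose characteristic foliation matches the model extends canonically (up to contact isotopy rel the disk) to an isocontact embedding of $\Op(\D^2_{\OT})$, and this extension is uniquely determined up to contractible choice. The formal analogue is parallel: a formal isocontact embedding of $\Op(\D^2_{\OT})$ is determined, up to homotopy through formal isocontact embeddings fixed near $\partial \D^2_{\OT}$, by its restriction to the disk together with the induced bundle-map data along $\D^2_{\OT}$, because the relevant jet bundle over $\Op(\D^2_{\OT})$ retracts onto its restriction to $\D^2_{\OT}$. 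Together these give a commutative diagram identifying the inclusion in Theorem~\ref{thm:OTFixedBoundary} with the inclusion of genuine strongly overtwisted disks into formal ones, both fixed near the boundary, in the parametric space appearing in Theorem~\ref{thm:H-PrincipleOvertwistedSurfaces}.

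The last step is to quote Theorem~\ref{thm:H-PrincipleOvertwistedSurfaces} for the disk $\D^2_{\OT}$ with its fixed Legendrian boundary. It provides a $C^0$-dense parametric $h$-principle for embedded strongly overtwisted surfaces rel boundary, which under the identifications above becomes exactly the claimed $C^0$-dense weak homotopy equivalence. The $C^0$-density on the embedding side follows from $C^0$-density on the surface side, since the neighborhood extension used in the dictionary can be taken arbitrarily thin.

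The main obstacle is the second step, namely setting up the formal dictionary carefully so that it is genuinely a weak equivalence and commutes with the inclusion from the genuine side. One must verify that the restriction-to-disk map from formal isocontact embeddings of $\Op(\D^2_{\OT})$ to formal strongly overtwisted disk embeddings (both fixed near $\partial \D^2_{\OT}$) is a weak homotopy equivalence; this is a bundle-theoretic statement that reduces to a contractibility-of-extensions lemma along a tubular neighborhood, but it has to be arranged so that it plays well with the parametric $h$-principle. Once this compatibility is established, the theorem is a formal consequence of Theorem~\ref{thm:H-PrincipleOvertwistedSurfaces}, exactly as the excerpt asserts.
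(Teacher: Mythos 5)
Your approach is the paper's: Theorem~\ref{thm:OTFixedBoundary} is stated to follow "automatically" from Theorem~\ref{thm:H-PrincipleOvertwistedSurfaces}, and you correctly identify the disk as strongly overtwisted and specialize that $h$-principle. Two small remarks: the spaces $\Emb_\partial$ and $\FEmb_\partial$ are already defined as germ spaces (the $\Op(\cdot)$ notation, cf.\ the reference to \S\ref{subsec:Germs}), so the "dictionary" step you worry about is built into the definitions and no restriction/extension argument is needed; and the rotation number of $\partial\D^2_{\OT}$ is $\pm1$, not $0$ (for a convex disk with $\tb=0$ one has $\rot=\chi(\Sigma_+)-\chi(\Sigma_-)$, which is odd), though this does not affect the strongly overtwisted inequality $\tb+|\rot|>-\chi(\D^2)=-1$.
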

\begin{remark}
    The $C^0$-dense weak homotopy equivalence just says that the previous $h$-principle is $C^0$-dense \cite{EliashbergMishachev:Book}. See Theorem \ref{thm:microfibration} for the precise definition.
\end{remark}

Observe that, in particular, the previous result allows displacing families of embeddings away from an overtwisted disk $\D^2_{\OT}$  with contact isotopies, as long as they do not intersect the boundary $\partial \D^2_{\OT}$ and there is no smooth obstruction.

\subsubsection{Overtwisted disks in strongly overtwisted contact $3$-manifolds.}

Let $\Emb^{\OT}(\D^2,(M,\xi))$ be the space of embeddings of overtwisted disks into a contact $3$-manifold $(M,\xi)$. The space of contact frames of $(M,\xi)$ is denoted by $\CFr(M,\xi)$, this is the space of pairs $(p,(v_1,v_2))$ where $p\in M$ and $(v_1,v_2)$ is an oriented basis of $\xi_p$.

\begin{corollary}\label{cor:OvertwistedDisks}
Let $(M,\xi)$ be a strongly overtwisted contact $3$-manifold.
\begin{itemize}
    \item [(i)] For every pair of overtwisted disks $\D^2_{\OT,1},\D^2_{\OT,2}\subseteq (M,\xi)$ there exists a contact isotopy $\varphi_t\in \Cont(M,\xi)$, $t\in[0,1]$, such that $\varphi_1(\D^2_{\OT,1})=\D^2_{\OT,2}$.
    \item [(ii)] The map \[ \Emb^{\OT}(\D^2,(M,\xi))\rightarrow \CFr(M,\xi), \] induced by the evaluation of the $1$-jet at the origin, is a weak homotopy equivalence. 
\end{itemize}
\end{corollary}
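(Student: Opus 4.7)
The plan is to prove part (ii) first and deduce part (i) from it via contact isotopy extension. For (ii), I would realize both $\Emb^{\OT}(\D^2,(M,\xi))$ and $\CFr(M,\xi)$ as orbits of the action of $\Cont_0(M,\xi)$ by post-composition and by its natural action on contact frames, respectively. Fixing a basepoint $f_0\in\Emb^{\OT}(\D^2,(M,\xi))$ with $j^1_0 f_0=\sigma_0\in\CFr(M,\xi)$, the orbit maps
\[ o_1:\Cont_0(M,\xi)\to\Emb^{\OT}(\D^2,(M,\xi)),\quad \varphi\mapsto\varphi\circ f_0, \]
and $o_2:\Cont_0(M,\xi)\to\CFr(M,\xi)$, $\varphi\mapsto\varphi\cdot\sigma_0$, are Serre fibrations, and the evaluation map $e$ satisfies $e\circ o_1=o_2$. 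Transitivity of $o_2$ onto the connected base $\CFr(M,\xi)$ is a classical Darboux-type fact. Transitivity of $o_1$ — which is essentially (i) up to parameterization — I would read off from the $h$-principle for strongly overtwisted surfaces (Theorem \ref{thm:H-PrincipleOvertwistedSurfaces}) applied to an ambient smooth isotopy carrying one embedded $2$-disk to another in the connected $3$-manifold $M$.

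With this setup, the fiber of $e$ over $\sigma_0$ identifies, as a homogeneous space, with the orbit of $f_0$ under the stabilizer $\mathrm{Stab}(\sigma_0)$, i.e., with the space of OT disk embeddings having $1$-jet $\sigma_0$ at the origin. The main work is to show this fiber is weakly contractible. My approach combines two ingredients. First, germ uniqueness of an overtwisted disk at its central elliptic singularity (Giroux normal form near an elliptic point) allows one to match any two OT disks with the same $1$-jet at $0$ on a small neighborhood of $0$, after a contact isotopy of $M$ supported near the common center. Second, excising this small center neighborhood, one is reduced to an extension problem for OT disks on the complementary annulus — with fixed inner germ (coming from the standard model at the center) and free Legendrian outer boundary. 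Applying Theorem \ref{thm:OTFixedBoundary} to this annular region absorbs the extension data into formal homotopy and yields contractibility.

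The hard part is precisely this last reduction: Theorem \ref{thm:OTFixedBoundary} is formulated for OT disks with a \emph{boundary} germ fixed, whereas the fiber of $e$ is controlled by a germ fixed near the \emph{center}. Bridging the two via the germ uniqueness at the elliptic singularity is where the strongly overtwisted hypothesis is essential — the complement of a small model OT disk in $(M,\xi)$ remains strongly overtwisted, so Theorem \ref{thm:H-PrincipleOvertwistedSurfaces} continues to apply on the complementary annulus. Once (ii) is established, (i) follows at once: since $\CFr(M,\xi)$ is connected, (ii) implies that $\Emb^{\OT}(\D^2,(M,\xi))$ is path-connected; any path from $f_1$ to $f_2$ in $\Emb^{\OT}(\D^2,(M,\xi))$ then extends, by the standard contact isotopy extension theorem, to an ambient contact isotopy $\varphi_t\in\Cont(M,\xi)$ with $\varphi_1(f_1(\D^2))=f_2(\D^2)$.
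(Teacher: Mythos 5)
Your proposal takes a genuinely different route from the paper, and it has a real gap at the crucial step. Let me explain both.

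The paper's proof of (ii) avoids ever analyzing the fiber of the evaluation map $e$ directly. Instead it compares two fibration sequences with the \emph{same} total space $\Cont(M,\xi)$: the top row $\Cont(M,\xi;\rel\D^2_{\OT})\hookrightarrow\Cont(M,\xi)\to\Emb^{\OT}(\D^2,(M,\xi))$ and the bottom row $\Cont(M,\xi;\rel p)\hookrightarrow\Cont(M,\xi)\to\CFr(M;\xi)$, connected by the identity in the middle and $e$ on the right. Theorem \ref{thm:MainContactomorphisms} (together with Proposition \ref{prop:ContactomorphismsOTAtInfinity} and the formal comparison, since $\D^2_{\OT}$ deformation retracts to $p$) identifies the two fibers $\Cont(M,\xi;\rel\D^2_{\OT})\hookrightarrow\Cont(M,\xi;\rel p)$ up to weak homotopy, and the Five Lemma finishes. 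There is no normalization of germs at the elliptic point, no excision, and no annulus. The paper also proves (i) independently of (ii), using Dymara's criterion (two OT disks are contact isotopic once one of them has been pushed so that the complement of the union is overtwisted) combined with Theorem \ref{thm:H-PrincipleOvertwistedSurfaces} to move a strongly overtwisted surface from the complement of one disk near the complement of the other.

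The gap in your proposal is the annulus step. You want to show the fiber of $e$ — OT disk embeddings with fixed $1$-jet at the origin — is weakly contractible by first normalizing the germ at the center and then applying Theorem \ref{thm:OTFixedBoundary} to the complementary annulus with ``fixed inner germ and free Legendrian outer boundary.'' But Theorem \ref{thm:OTFixedBoundary}, and more generally Theorem \ref{thm:H-PrincipleOvertwistedSurfaces}, are h-principles for $\F$-embeddings that are \emph{fixed near the boundary of the surface} and have \emph{fixed characteristic foliation} throughout. Neither statement provides an h-principle for embeddings rel a germ at an interior point with free behavior near $\partial\Sigma$, nor for annuli (Definition 1.1(ii) is only formulated for connected Legendrian boundary, so ``strongly overtwisted annulus'' is not even defined in the paper, and your annulus has two boundary components with very different roles). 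Even granting the local normal form near the elliptic point, the relative h-principle you need is not among the paper's tools, and deriving it would require essentially re-running the arguments of Section \ref{sec:StronglyOvertwistedSurfaces} in a new relative setting. By contrast, the paper's Five-Lemma comparison sidesteps the fiber computation entirely, which is precisely what makes its proof short. Your deduction of (i) from (ii) — connectedness of $\CFr$ gives connectedness of $\Emb^{\OT}$, then contact isotopy extension for surfaces with fixed characteristic foliation — is correct, and is a clean alternative to the paper's direct argument via \cite{Dymara:ContRelOT}.
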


\subsubsection{Legendrians in strongly overtwisted manifolds.}

Recall that a Legendrian $L\subseteq (M,\xi)$ is said to be \em loose \em if there exists an overtwisted disk $\D^2_{\OT}$ such that $\D^2_{\OT}\cap L=\emptyset$. We will say that a Legendrian $L\subseteq (M,\xi)$ is \em strongly loose \em if there exists a strongly overtwisted surface $\Sigma\subseteq (M,\xi)$ such that $\Sigma\cap L=\emptyset$.  

Notice that the strongly looseness property just depends on the smooth link type. Therefore, we will say that a smooth link type $K$ in $M$ is strongly loose in $(M,\xi)$, if there exists some smooth representative of $K$ in the complement of a strongly overtwisted surface. 

Given a link type $K$ we will denote by $\L(K,(M,\xi))$ the space the of Legendrian embeddings realizing the link type $K$ and by $\FL(K,(M,\xi))$ the corresponding space of formal Legendrian embeddings. 

\begin{corollary}\label{cor:StronglyLooseLegendrians}
Let$(M,\xi)$ be a strongly overtwisted contact $3$-manifold. 
\begin{itemize}
  \item [(i)] Let $L_1, L_2\subseteq (M,\xi)$ be two loose Legendrians which are formally Legendrian isotopic. Then, $L_1$ and $L_2$ are Legendrian isotopic. 
  \item [(ii)] Let $K$ be a strongly loose smooth link type in $(M,\xi)$. Then, the inclusion 
  \[  i_{\L}:\L(K,(M,\xi))\hookrightarrow \FL(K,(M,\xi)) \]
  is a weak homotopy equivalence.
\end{itemize}
\end{corollary}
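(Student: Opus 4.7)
The plan is to deduce both parts from Theorem~\ref{thm:MainContactomorphisms} by comparing the $\Cont\to \L$ action fibration on a chosen Legendrian with its formal analog.

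For part~(i), I would invoke the non-parametric $h$-principle for loose Legendrian knots in overtwisted contact $3$-manifolds (Dymara): by Theorem~\ref{thm:EliashbergBound} the ambient $(M,\xi)$ is overtwisted, so two formally Legendrian isotopic loose Legendrians in it are Legendrian isotopic. The strongly overtwisted hypothesis is really used only through part~(ii).

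For part~(ii), I would first pick a genuine Legendrian representative $L_0$ of the strongly loose link type $K$ disjoint from a strongly overtwisted surface $\Sigma\subseteq (M,\xi)$, a standard Weinstein neighborhood $N=\Op(L_0)$, and set $M_0:=M\setminus\Int(N)$. Since $\Sigma\subseteq M_0$, the compact contact manifold $(M_0,\xi|_{M_0})$ is again strongly overtwisted. The evaluation-at-$L_0$ maps then fit into a commutative diagram whose top row is the homotopy fibration
\[
\Cont(M_0,\xi|_{M_0})\longrightarrow \Cont(M,\xi)\longrightarrow \L(K,(M,\xi))_{L_0},
\]
whose bottom row is the analogous sequence of formal objects, and whose vertical arrows are the natural inclusions. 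Here the identification of the fiber of the action with $\Cont$ (resp.\ $\FCont$) of the complement $M_0$ relies on the parametric Weinstein neighborhood theorem for Legendrians, which in particular lets one homotope a (formal) contactomorphism fixing $L_0$ pointwise to one fixing a neighborhood of $L_0$. By Theorem~\ref{thm:MainContactomorphisms} applied to both $(M,\xi)$ and $(M_0,\xi|_{M_0})$, the total-space and fiber vertical maps are weak homotopy equivalences, and a $5$-lemma / long exact sequence argument then forces $i_\L$ to be a weak equivalence on the path-component of $L_0$. Ranging $L_0$ over all path-components and combining Dymara's existence of a loose representative in every formal class (which, after a small perturbation off $\Sigma$, becomes strongly loose) for $\pi_0$-surjectivity with part~(i) for $\pi_0$-injectivity promotes this to the claimed weak equivalence on the full spaces.

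The main obstacle will be the Weinstein-type identification of the fibers of the action maps, carried out parametrically and simultaneously on the formal and genuine sides so that the comparison diagram genuinely commutes and the $5$-lemma applies; one also has to verify that the evaluation maps are Serre fibrations, for which the usual Legendrian isotopy extension theorem and its formal counterpart suffice. Once these technical points are in place, the rest is a formal diagram chase driven by Theorem~\ref{thm:MainContactomorphisms}.
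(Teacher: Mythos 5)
Your argument for part~(i) is incorrect, and this is the central problem. You claim that because $(M,\xi)$ is overtwisted (by Theorem~\ref{thm:EliashbergBound}), Dymara's theorem already shows two formally isotopic loose Legendrians are Legendrian isotopic, so ``the strongly overtwisted hypothesis is really used only through part~(ii).'' This contradicts the paper's own discussion preceding the corollary: Dymara's result applies only when there is a \emph{common} overtwisted disk in the complement of both Legendrians (or, in the refinement, an isotopy of overtwisted disks $\Delta_t$ with $L_t\cap\Delta_t=\emptyset$), and Vogel's example \cite{Vogel:OTDisk} shows this condition \emph{cannot} be removed in general. Without strongly overtwistedness, loose + formally isotopic does not imply Legendrian isotopic. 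The paper's proof of (i) uses the strongly overtwisted hypothesis in an essential way: it invokes Corollary~\ref{cor:OvertwistedDisks}(i) (itself a consequence of Theorem~\ref{thm:MainContactomorphisms}) to produce a contact isotopy carrying the overtwisted disk avoiding $L_0$ onto the one avoiding $L_1$, only after which the two Legendrians share an overtwisted disk in their complement and Dymara applies.

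For part~(ii), your fibration route via the evaluation map $\Cont(M,\xi)\to\L(K,(M,\xi))_{L_0}$ with fiber identified (up to weak equivalence) with $\Cont$ of the Legendrian complement is a genuinely different strategy from the paper's, which instead uses Theorem~\ref{thm:H-PrincipleOvertwistedSurfaces} to parametrically slide the strongly overtwisted surface off the family of formal Legendrians and then cites Cardona--Presas. The fibration comparison, if carried out, would handle $\pi_n$ for $n\geq 1$ on the component of $L_0$, and the technical points you flag (Weinstein-type identification of fibers, evaluation maps being Serre fibrations) are standard and resolvable. However, the $\pi_0$-injectivity step cannot be extracted from the long exact sequence alone (two distinct $\L$-components could sit in the same $\FL$-component with both giving weak equivalences onto it), and you defer it to part~(i) --- so the gap in (i) propagates. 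You would need to supply a correct argument such as the paper's: first use the contactomorphism $h$-principle (via Corollary~\ref{cor:OvertwistedDisks}) to arrange a common overtwisted disk in the complements, and then apply the Legendrian $h$-principle relative to that fixed disk.

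A smaller point on part~(ii): to start the fibration argument you need a genuine Legendrian representative $L_0$ of $K$ disjoint from a strongly overtwisted surface; a ``small perturbation off $\Sigma$'' of a loose representative from Dymara is not automatically possible, since a Legendrian cannot be perturbed arbitrarily while remaining Legendrian. One instead produces a Legendrian in the complement of $\Sigma$ using the existence $h$-principle inside the (overtwisted-at-infinity) complement; this is implicit in the paper's proof via the isotopy extension argument after fixing $i(\Sigma)\cap\image(\gamma^0)=\emptyset$.
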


As shown by Etnyre \cite{Etnyre:LegendriansOT}, see also \cite{EliashbergFraser}, two formally isotopic loose Legendrians are coarsely equivalent. Moreover, if there is a common overtwisted disk in the complement of both Legendrians then these are Legendrian isotopic by the work of Dymara \cite{Dymara:LegendriansOT}. This condition can be relaxed: every formal Legendrian isotopy $L_t$, $t\in[0,1]$, connecting two genuine Legendrians $L_0$ and $L_1$ is formally isotopic, $\rel\{0,1\}$, to a Legendrian isotopy if there exists an isotopy of overtwisted disks $\Delta_t$, $t\in [0,1]$, such that $L_t\cap \Delta_t=\emptyset$ for all $t\in[0,1]$. This last condition cannot be removed in general \cite{Vogel:OTDisk}. More generally, Cardona and Presas \cite{CardonaPresas} established an $h$-principle for Legendrians with a fixed overtwisted disk in the complement. All these flexibility results build on \cite{Eliashberg:OT}. From this point of view Corollary \ref{cor:StronglyLooseLegendrians} is a natural consequence of our main results. 

\begin{remark}
    Corollary \ref{cor:OvertwistedDisks} and \ref{cor:StronglyLooseLegendrians} are two instances of the same phenomena. In general, given a submanifold $N\subseteq (M,\xi)$ we say that the smooth isotopy class of $N$ is strongly loose if every component of $M\backslash N$ contains a strongly overtwisted surface. Then, every subspace of the space of smooth embeddings of $N$ into $M$ smoothly isotopic to the inclusion for which the contact isotopy extension theorem applies is governed by an $h$-principle. For instance, this applies for transverse embeddings, codimension $0$ isocontact embeddings, embeddings of surfaces with fixed characteristic foliation, etc. 
\end{remark}

\subsection{Outline}
The article is organized as follows. In Section \ref{sec:contactomorphisms} we introduce notation and review some consequences of \cite{Eliashberg:OT} regarding the contactomorphism group of a contact $3$-manifold at infinity. In Section \ref{sec:StronglyOvertwistedSurfaces} we review the Microfibration Trick from \cite{FMP:Tight} and we prove the $h$-principle for strongly overtwisted surfaces with fixed characteristic foliation (Theorem \ref{thm:H-PrincipleOvertwistedSurfaces}). In Section \ref{sec:proofs} we prove the main results: Theorems \ref{thm:Main} and \ref{thm:MainContactomorphisms}. Finally, Corollaries \ref{cor:OvertwistedDisks} and \ref{cor:StronglyLooseLegendrians} are proved in Section \ref{sec:Applications}

\subsection*{Acknowledgements} The author is very grateful to Fran Presas, who taught him a significant amount of mathematics, including $h$-principles and symplectic topology, and for countless discussions about flexibility in contact topology. The author would also like to acknowledge Fabio Gironella, \'Alvaro de Pino, and Guillermo S\'anchez for valuable discussions and feedback on this work, as well as David Gay and Gordana Mati\'c for their interest in this project and their constant support. The author thanks the anonymous referee for carefully reading the manuscript.

\section{Contactomorphisms of contact $3$-manifols overtwisted at infinity}\label{sec:contactomorphisms}

In this Section, we introduce several notations and conventions that we will follow throughout the article. We also recall the main result of \cite{Eliashberg:OT}. Finally, we explain how to use \cite{Eliashberg:OT} to deduce an $h$-principle for the contactomorphism group of a contact $3$-manifold overtwisted at infinity (see Proposition \ref{prop:ContactomorphismsOTAtInfinity}).

\subsection{Formal contact structures}

\begin{definition}
    Let $M$ be a oriented $3$-manifold. A \em formal contact structure \em on $M$ is a co-oriented plane field $\xi\subseteq TM$. The pair $(M,\xi)$ is a \em formal contact $3$-manifold\em.
\end{definition}

The space of formal contact structures on $M$ is denoted by $\FC(M)$. In the case that $M$ is open or has non-empty boundary we will always assume that every formal contact structure coincides with an unspecified but \em fixed \em contact structure outside a compact set or near the boundary. Such a contact structure would be clear from the context. Given a formal contact structure $\xi\in \FC(M)$, we will denote the path connected component of $\FC(M)$ containing $\xi$ by $\FC(M;\xi)$. 

We will denote by $\C(M)\subseteq \FC(M)$ the subspace of contact structures on $M$. Similarly, for a given contact structure $\xi\in \C(M)$ we will denote by $\C(M;\xi)$ the connected component of $\C(M)$ containing $\xi$. 

If $A\subseteq M$ is a closed subset equipped with some germ of contact structure we will also consider the subspaces $\FC(M,\rel A)$ and $\C(M,\rel A)$ of formal and genuine contact structures that induce the same germ over $A$. Similarly, $\FC(M;\xi,\rel A)$ and $\C(M;\xi,\rel A)$ will denote the components containing $\xi$.

\subsection{Formal contactomorphisms}
A \em formal diffeomorphism \em is a pair $(\varphi, F_s)$ such that $\varphi\in \Diff(M)$, is an orientation preserving diffeomorphism; and $F_s:TM\rightarrow TM$, $s\in[0,1]$, is a homotopy of vector bundle isomorphisms covering $\varphi$ such that $F_0=d\varphi$. The space of formal diffeomorphisms is denoted by $\FDiff(M)$. Note that the natural inclusion $ \Diff(M)\hookrightarrow \FDiff(M) $ is a homotopy equivalence. In the case that $M$ is open we assume that every (formal) diffeomorphism has compact support. Similarly, if $\partial M\neq \emptyset$ we will assume that every (formal) diffeomorphism is the identity near $\partial M$. The connected component of the identity $(\Id,d\Id)$ in $\FDiff(M)$ is denoted by $\FDiff_0(M)$; and in $\Diff(M)$ by $\Diff_0(M)=\Diff(M)\cap \FDiff_0(M)$. 

\begin{definition}\label{def:FormalContactomorphism}
    A \em formal contactomorphism \em of a formal contact $3$-manifold $(M,\xi)$ is a formal diffeomorphism $(\varphi,F_s)\in \FDiff(M)$ such that $F_1(\xi)=\xi$ as co-oriented plane fields.
\end{definition}

The group of formal contactomorphisms is denoted by $\FCont(M,\xi)$. If $A\subseteq M$ is a closed subset we will denote by $\FCont(M,\xi;\rel A)\subseteq \FCont(M,\xi)$ the subgroup conformed by those formal contactomorphisms that agree with the identity $\Id=(\Id,d\Id)$ over $\Op(A)$. 

In the case that $(M,\xi)$ is a contact $3$-manifold, we will say that a formal contactomorphism $(\varphi,F_s)\in\FCont(M,\xi)$ is a (genuine) \em contactomorphism \em when $F_s=d\varphi$. In this case we will simply write $\varphi$ instead of $(\varphi,d\varphi)$. The group of contactomorphisms of $(M,\xi)$ is denoted by $\Cont(M,\xi)$.

The homotopy type of the space of (formal) contact structures and (formal) contactomorphisms are closely related: 
\begin{lemma}[Gray Fibrations]\label{lem:Gray} \hfill
  \begin{itemize} 
        \item [(i)] Let $(M,\xi)$ be a contact $3$-manifold. The map $$G:\Diff_0(M)\rightarrow \C(M;\xi),\varphi\mapsto \varphi_* \xi;$$ is a fibration with fiber $\Cont_0(M,\xi)=\Diff_0(M)\cap \Cont(M,\xi)$.
        \item [(ii)] Let $(M,\xi)$ be a formal contact $3$-manifold. The map $$FG:\FDiff_0(M)\rightarrow \FC(M;\xi),(\varphi,F_s)\mapsto F_1(\xi);$$ is a fibration with fiber $\FCont_0(M,\xi)=\FDiff_0(M)\cap \FCont(M,\xi)$.
    \end{itemize}
    Both (i) and (ii) also hold relative to a closed subset $A\subseteq M$. 
\end{lemma}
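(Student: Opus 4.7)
The plan is to verify the homotopy lifting property for both maps directly; the fibers are read off from the definitions, since $G^{-1}(\xi) = \{\varphi\in\Diff_0(M): \varphi_*\xi = \xi\} = \Cont_0(M,\xi)$ and $FG^{-1}(\xi) = \{(\varphi,F_\tau): F_1(\xi)=\xi\} = \FCont_0(M,\xi)$.

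For (i), the main input is a parametric version of Gray's stability theorem. Given base data $\xi^a_t \in \C(M;\xi)$ for $(a,t)\in D^n\times[0,1]$ together with an initial lift $\varphi^a_0\in\Diff_0(M)$ satisfying $(\varphi^a_0)_*\xi = \xi^a_0$, I would pull back to the family $\eta^a_t := (\varphi^a_0)^*\xi^a_t$, which satisfies $\eta^a_0 = \xi$. Applying Moser's trick in the parameters $(a,t)$---pick a continuous family of defining $1$-forms $\alpha^a_t$ with $\ker\alpha^a_t = \eta^a_t$, solve the linear equation
\[ \iota_{X^a_t}\,d\alpha^a_t + \dot\alpha^a_t = \mu^a_t\,\alpha^a_t \]
for $X^a_t\in\ker\alpha^a_t$, and integrate---produces a continuous family of diffeomorphisms $\psi^a_t\in\Diff_0(M)$ with $\psi^a_0=\Id$ and $(\psi^a_t)_*\xi = \eta^a_t$. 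Standard ODE theory gives continuous dependence on $a$, and the desired lift is $\varphi^a_t := \varphi^a_0\circ\psi^a_t$.

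For (ii), no analytic input is required. After fixing a Riemannian metric on $M$, I would produce a continuous family of bundle automorphisms $G^a_t: TM\to TM$ covering $\Id_M$, with $G^a_0 = \Id$ and $G^a_t(\eta^a_0) = \eta^a_t$, where $\eta^a_t$ now denotes the given base family of plane fields---for instance by rotating each plane fiberwise onto its target inside $T_pM$. This is possible because the space of such $G^a_t$ fibers over the space of plane fields with convex, hence contractible, fibers. Given an initial lift $(\varphi^a_0, F^a_0)$ at $t=0$ with $F^a_{0,1}(\xi) = \eta^a_0$, set $\varphi^a_t := \varphi^a_0$ and define the path $F^a_t$ as the concatenation of $F^a_0$ with $\tau\mapsto G^a_{t\tau}\circ F^a_{0,1}$ (after reparametrization), so that the endpoint satisfies $F^a_{t,1}(\xi) = G^a_t(\eta^a_0) = \eta^a_t$. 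The relative version over a closed $C\subseteq M$ is automatic in both parts: the Moser vector field $X^a_t$ vanishes wherever $\xi^a_t$ is stationary in $t$, so $\psi^a_t = \Id$ there, and $G^a_t$ can be chosen equal to $\Id$ wherever $\eta^a_t$ is.

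The main subtle point I anticipate is the parametric-plus-relative bookkeeping in (i)---choosing the defining $1$-forms $\alpha^a_t$ to agree with a fixed reference form near $C$ and simultaneously depend continuously on $a$---but this is a classical move and follows from uniqueness and smooth parameter dependence for the linear ODE. The formal case (ii) is essentially tautological once the bundle-theoretic setup is in place.
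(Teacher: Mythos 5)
The paper states Lemma \ref{lem:Gray} without proof, treating it as standard, so your attempt can only be judged on its own merits; your overall strategy (Gray--Moser for (i), fiberwise gauge rotation for (ii)) is the standard route. There is, however, a genuine gap in (i) as written: the homotopy lifting property must be verified for \emph{arbitrary continuous} homotopies $\D^n\times[0,1]\to \C(M;\xi)$ in the $C^\infty$-topology, and such a homotopy need not be differentiable in $t$, so the term $\dot\alpha^a_t$ in your Moser equation need not exist --- you cannot run Moser's trick directly on the given family. The standard repair is to prove instead that the orbit map admits local sections: for any $\xi_0\in\C(M;\xi)$ and any $\xi'$ in a small neighborhood of $\xi_0$, choose defining forms continuously (e.g.\ normalizing against the Reeb field of a fixed form $\alpha_0$ for $\xi_0$) and run Moser along the linear interpolation $(1-t)\alpha_0+t\alpha'$, which is smooth in $t$ and contact for all $t$ by openness of the contact condition on a compact manifold; this yields $\varphi(\xi')\in\Diff_0(M)$ depending continuously on $\xi'$ with $\varphi(\xi')_*\xi_0=\xi'$. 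Local sections make $G$ a locally trivial bundle, hence a Serre fibration, and they also show the orbit of $\xi$ is open and closed in $\C(M;\xi)$, giving surjectivity; choosing the interpolation constant near $C$ gives the relative case. Your bookkeeping $\varphi^a_t=\varphi^a_0\circ\psi^a_t$ and the identification of the fibers are fine once this is in place.

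In (ii) your lift (concatenation with a $t$-dependent reparametrization so that the $t=0$ lift is the given one) is correct, but the justification offered for the existence of the gauge transformations $G^a_t$ is not: the fiber of $G\mapsto G(\eta_0)$ over a plane field is a coset of the stabilizer of $\eta_0$ in the group of bundle automorphisms of $TM$, and this is \emph{not} convex --- a convex combination of invertible bundle maps preserving $\eta_0$ may fail to be invertible. The needed statement is nevertheless true and elementary: fixing a metric, identify co-oriented plane fields with unit vector fields, subdivide $[0,1]$ so that on each subinterval the unit normals of $\eta^a_t$ and of $\eta^a_{t_i}$ are nowhere antipodal (uniform continuity on the compact parameter space), and rotate along the unique short geodesic of $\NS^2$ in each fiber. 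This construction is canonical, continuous in all variables, and equal to the identity wherever the family is $t$-stationary, so the relative statement in (ii) also follows.
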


\subsection{Eliashberg's overtwisted $h$-principle}

The importance of overtwisted contact structures relies on the following $h$-principle due to Eliashberg, later generalized to all dimensions by Borman, Eliashberg and Murphy \cite{BEM}.

\begin{theorem}[Eliashberg \cite{Eliashberg:OT}]\label{thm:EliashbergOT}
Let $M$ be a $3$-manifold and $\D^2_{\OT}\subseteq M$ an embedded $2$-disk equipped with the germ of an overtwisted disk. Then, the natural inclusion 
\[  i:\C(M,\rel \D^2_{\OT})\hookrightarrow \FC(M, \rel \D^2_{\OT}) \]
is a weak homotopy equivalence.
\end{theorem}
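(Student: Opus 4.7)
The plan is to reduce the global parametric $h$-principle to a local parametric $h$-principle on $3$-balls, using the overtwisted disk as a universal absorber of formal obstructions. First I would fix a triangulation of $M$ in which $\D^2_{\OT}$ is contained in the star of a vertex, and argue by induction on the skeleta. Over the $0$- and $1$-skeleta the statement is automatic, and over $2$-cells it follows from convex surface theory combined with the Gray fibration (Lemma~\ref{lem:Gray}). The content therefore lies in extending a parametric family of contact germs on $\partial B^3$ to a family of contact structures on each $3$-cell $B^3$. Before applying any local argument, I would first ``distribute'' copies of $\D^2_{\OT}$ into the collar of each $3$-cell by a Lutz-tube construction, so that each $3$-cell may be assumed to contain an overtwisted disk in its interior while leaving the relative formal homotopy class of the plane field unchanged.

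The key local statement is then: for $B^3$ with an overtwisted disk $\D^2_{\OT}$ in its interior, the inclusion
\[ \C(B^3; \rel \partial B^3 \cup \D^2_{\OT}) \hookrightarrow \FC(B^3; \rel \partial B^3 \cup \D^2_{\OT}) \]
is a weak homotopy equivalence. I would prove this in two parts. For \emph{existence}, I would $C^0$-approximate any formal plane field on $B^3$ by a genuine contact structure via Lutz twists along a transverse link; parametric Lutz twists supported near $\D^2_{\OT}$ then correct the formal homotopy class as needed, without disturbing the boundary germ. For \emph{uniqueness}, I would cut $B^3$ along a convex sphere $S$ separating $\D^2_{\OT}$ from $\partial B^3$. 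The inner piece admits, up to isotopy rel boundary germ, a standard overtwisted filling classified by its characteristic foliation, and the outer piece is relatively tight, so uniqueness reduces to Eliashberg's classification of tight contact structures on the $3$-ball with prescribed boundary germ.

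I would then assemble the local $h$-principles into the global one by a microfibration / microflexibility argument of the type recalled in Section~\ref{sec:StronglyOvertwistedSurfaces}: a $k$-sphere of formal contact structures rel $\D^2_{\OT}$ is nullhomotoped cell by cell, and continuity in the family parameter is guaranteed by the fact that the parametric local statement makes the space of solutions on each $3$-cell contractible. Combined with the Gray fibration (Lemma~\ref{lem:Gray}), this yields weak equivalence of the inclusion $i_{\C}$.

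The main obstacle would be the parametric form of the local $h$-principle on $B^3$: one needs a family version of Eliashberg's uniqueness theorem for tight contact structures on a ball with prescribed characteristic foliation on the boundary, together with a parametric analysis of overtwisted germs. Ensuring that the required manipulations of characteristic foliations can be carried out continuously in a $k$-parameter family, while keeping both the boundary germ and the interior overtwisted disk fixed, is where the bulk of the technical effort would concentrate; the rest is bookkeeping and standard parametric $h$-principle machinery.
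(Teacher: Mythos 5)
This statement is not proved in the paper at all: it is quoted verbatim from \cite{Eliashberg:OT}, so the only meaningful comparison is with Eliashberg's original argument, and your sketch does not reconstruct it; it contains a genuine circularity. Your ``key local statement'' --- that $\C(B^3;\rel \partial B^3\cup \D^2_{\OT})\hookrightarrow \FC(B^3;\rel\partial B^3\cup \D^2_{\OT})$ is a weak equivalence when the overtwisted disk sits in the interior of the ball --- is not a local simplification: it is exactly the theorem being proved in the case $M=B^3$. And your proposed proof of it assumes the conclusion twice. First, the assertion that the inner piece cut out by a convex sphere ``admits, up to isotopy rel boundary germ, a standard overtwisted filling classified by its characteristic foliation'' is precisely the classification of overtwisted contact structures on a ball rel boundary, i.e.\ the statement under proof. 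Second, the claim that the outer piece is ``relatively tight'' is unjustified and in general false: the members of the family are only required to agree with $\xi$ near the fixed disk, and away from it they may be overtwisted anywhere, so no convex sphere confines the overtwistedness to the inner piece and no reduction to the (separate, and itself nontrivially parametric) uniqueness of tight structures on the ball is available. The Lutz-tube step has the same problem in the uniqueness/parametric direction: inserting Lutz twists produces a genuinely different family of contact structures, and knowing it is homotopic through contact structures, rel the fixed disk, to the original family again requires the theorem; moreover a parametric family of corrections by Lutz twists along continuously varying transverse links is not something you can simply decree.

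The missing idea, which is the heart of Eliashberg's proof, is that the fixed overtwisted disk must be brought into contact with the boundary of the cell over which the extension is performed, not kept fixed in the interior. After making the family contact near the $2$-skeleton (this step uses the $h$-principle for contact structures on open manifolds/holonomic approximation, not convex surface theory) and concentrating all remaining holes into a single ball, one connects that ball to $\D^2_{\OT}$ so that the \emph{boundary germ} of the remaining ball is itself overtwisted in a standard model form. The technical core is then an extension lemma, proved by a direct parametric analysis of characteristic foliations on $2$-spheres, stating that a family of plane fields on the ball which are contact and overtwisted near the boundary can be deformed, rel the boundary germ, to a family of contact structures. Nothing in your sketch supplies a substitute for this lemma, and the cut-and-classify scheme you propose cannot, because it reduces the problem either to the theorem itself or to a tightness statement that does not hold in this setting.
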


As mentioned before, the work \cite{Vogel:OTDisk} implies that the condition of being fixed near the overtwisted disk cannot be removed in general.

\subsection{Contactomorphisms of contact $3$-manifolds overtwisted at infinity}

\begin{definition}
    A contact $3$-manifold $(M,\xi)$ is \em overtwisted at infinity \em if for every compact set $K\subseteq (M,\xi)$ every component of $(M\backslash K, \xi)$ is overtwisted.
\end{definition}

The prototypical example of a contact $3$-manifold overtwisted at infinity is the complement of an overtwisted disk. For us, the following examples, which reformulate Theorem \ref{thm:EliashbergBound}, will be used in an instrumental way in the proofs of the main results of this article. 

\begin{theorem}(Bennequin-Eliashberg Inequalities)\label{thm:OTatInfinity}
    Let $(\Sigma\times (-1,1),\xi)\subseteq (M,\xi)$ be an open neighborhood of a strongly overtwisted surface $\Sigma=\Sigma\times\{0\}$. If $\partial \Sigma\neq \emptyset$ further assume that $(\Op(\partial \Sigma)\times (-1,1),\xi)$ is vertically invariant. Then, the contact manifold $(\Sigma\times(-1,1),\xi)$ is overtwisted at infinity.
\end{theorem}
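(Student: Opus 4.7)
The plan is to deduce the statement as a direct consequence of Theorem \ref{thm:EliashbergBound} applied to parallel copies of $\Sigma$ near each end of $\Sigma\times(-1,1)$.

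Since $K$ is compact, there exists $a\in(0,1)$ with $K\subseteq \Sigma\times[-a,a]$, so that the end-slabs $U^{+}:=\Sigma\times(a,1)$ and $U^{-}:=\Sigma\times(-1,-a)$ lie entirely in $(\Sigma\times(-1,1))\setminus K$. Any component of the complement that reaches an end of $\Sigma\times(-1,1)$ must contain the parallel copy $\Sigma_{s}:=\Sigma\times\{s\}$ for every $|s|$ sufficiently close to $1$. It therefore suffices to show that such a $\Sigma_{s}$ is again a strongly overtwisted surface, since an application of Theorem \ref{thm:EliashbergBound} then places an overtwisted disk inside any sufficiently small tubular neighborhood $\Sigma\times(s-\delta,s+\delta)\subseteq U^{+}$ (resp.\ $U^{-}$), hence inside the ambient component.

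To verify that $\Sigma_{s}$ is strongly overtwisted for every $s$ with $|s|>a$: in the closed case the vertical translation is an ambient isotopy from $\Sigma$ to $\Sigma_{s}$, so $e(\xi)[\Sigma_{s}]=e(\xi)[\Sigma]$ and $\chi_{-}(\Sigma_{s})=\chi_{-}(\Sigma)$, and the defining inequality transfers verbatim. In the boundary case, the hypothesis that $\xi$ is vertically invariant on $\Op(\partial\Sigma)\times(-1,1)$ is essential: it ensures that $\partial\Sigma_{s}=\partial\Sigma\times\{s\}$ is again Legendrian, that $\Sigma_{s}$ is convex along a collar of its boundary with the same germ of characteristic foliation, and hence $\tb(\partial\Sigma_{s})=\tb(\partial\Sigma)$. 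The rotation number and Euler characteristic are invariants of the isotopy, so $\tb(\partial\Sigma_{s})+|\rot_{\Sigma_{s}}(\partial\Sigma_{s})|>-\chi(\Sigma_{s})$ continues to hold for $\Sigma_{s}$.

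The only genuinely delicate point in this argument is the transfer of $\tb$ and $\rot_{\Sigma}$ to $\Sigma_{s}$ in the boundary case; the vertical-invariance hypothesis on $\Op(\partial\Sigma)\times(-1,1)$ is tailored precisely to make this transfer automatic via convex surface theory, after which Theorem \ref{thm:EliashbergBound} does the remaining work.
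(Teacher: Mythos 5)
Your argument is correct, and at bottom it is the same mathematics as the paper's, but packaged differently: you reduce the statement to Theorem \ref{thm:EliashbergBound} applied to the translated slices $\Sigma\times\{s\}$, after checking that vertical invariance of $\xi$ near $\partial\Sigma\times(-1,1)$ transfers the boundary data (Legendrianity, convexity of a boundary collar, $\tb$) and that $\rot_\Sigma(\partial\Sigma)$ and $\chi$ are unchanged along the translation, which is indeed the only new content needed; the paper instead inlines, for each slice, the single-surface argument: since $\tb(\partial\Sigma)\leq 0$ and the slice is convex near its boundary, $\Sigma\times\{t\}$ is perturbed rel boundary to a $C^\infty$-close convex surface, and then Giroux's tightness criterion together with the Bennequin--Eliashberg inequality forces an overtwisted disk in any vertically invariant neighborhood, since otherwise $\partial\Sigma\times\{t\}$ would satisfy $\tb+|\rot|\leq-\chi$. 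Your reduction is cleaner and isolates exactly the delta between the two statements, but be aware of one organizational caveat: the paper's own justification of the boundary case of Theorem \ref{thm:EliashbergBound} defers its ``further details'' precisely to Theorem \ref{thm:OTatInfinity}, so quoting Theorem \ref{thm:EliashbergBound} wholesale skates over the convexification and Giroux/Bennequin--Eliashberg step; a self-contained write-up should either supply that argument for a single surface with boundary (it is not verbatim a literature theorem) or note explicitly that the with-boundary case of Theorem \ref{thm:EliashbergBound} is established independently of the present statement. Minor point in your favor: you restrict attention to components of the complement of $K$ that reach the ends, which is the only sensible reading of ``overtwisted at infinity'' (small bounded components are always tight), and which the paper's proof leaves implicit; your brief justification of the invariance of $\rot$ (as a relative Euler number constant along a homotopy through surfaces with Legendrian boundary) is the one step worth spelling out in full.
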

\begin{proof}
    In the case that $\Sigma$ is closed the result follows from Eliashberg's bound on the Euler class of a tight contact structure \cite{Eliashberg:Twenty} since every surface $\Sigma\times\{t\}\subseteq (\Sigma\times(-1,1),\xi)$ is strongly overtwisted.

    Assume that $\partial \Sigma\neq \emptyset$. It is enough to see that every $\Sigma\times\{t\}$ can be smoothly isotoped, relative to the boundary, to a $C^\infty$-close surface $\widehat{\Sigma}_t$ that contains an overtwisted disk in every small open neighborhood. Since $\tb(\Sigma\times\{t\})\leq 0$, and $\Sigma$ is convex near the boundary, we can smoothly isotope, relative to the boundary, the surface $\Sigma\times\{t\}$ into a convex surface $\widehat{\Sigma}_t$ which is $C^\infty$-close to $\Sigma\times\{t\}$, see \cite{Giroux:Convexity,Honda:Classification}. Now, the Bennequin-Eliashberg inequality \cite{Bennequin,Eliashberg:Twenty} coupled with Giroux Tightness Criteria \cite{Giroux:Tightness} imply the existence of some overtwisted disk in a vertically invariant neighborhood of $\widehat{\Sigma}_t$. Indeed, if this was not the case, the Legendrian $\partial \Sigma$ would satisfy the Bennequin-Eliashberg inequality. 
\end{proof}

Now, we will determine the contactomorphism group of every contact $3$-manifold which is overtwisted at infinity. First, we prove the non-parametric $h$-principle:

\begin{lemma}\label{lem:Pi0Cont}
Let $(M,\xi)$ be a contact $3$-manifold overtwisted at infinity. Then, the inclusion $\Cont(M,\xi)\hookrightarrow \FCont(M,\xi)$ is surjective on path-components.
\end{lemma}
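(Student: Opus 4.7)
The strategy is to combine Theorem \ref{thm:EliashbergOT} with Gray stability and a Five Lemma argument on the two Gray fibrations from Lemma \ref{lem:Gray}.

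Given $(\varphi, F_s)\in \FCont(M,\xi)$, which has compact support $K$, I would first pick an overtwisted disk $\D^2_{\OT}\subseteq M\setminus K$, which exists inside some component of $M\setminus K$ because $(M,\xi)$ is overtwisted at infinity. Near $\D^2_{\OT}$ both $\varphi$ and $F_s$ equal the identity, so the path of plane fields $s\mapsto F_s(\xi)$ lies in $\FC(M;\xi, \rel \D^2_{\OT})$ and connects $\varphi_*\xi$ at $s=0$ to $\xi$ at $s=1$. Applying Theorem \ref{thm:EliashbergOT} rel $\D^2_{\OT}$ produces a genuine path of contact structures $\tilde\xi_t\in \C(M;\xi, \rel\D^2_{\OT})$ from $\varphi_*\xi$ to $\xi$, and Gray stability (Lemma \ref{lem:Gray}(i), rel $\D^2_{\OT}$) integrates it to an isotopy $\psi_t\in \Diff_0(M, \rel \D^2_{\OT})$ with $\psi_0=\Id$ and $(\psi_t)_*\varphi_*\xi = \tilde\xi_t$. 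Then $\chi:=\psi_1\circ\varphi$ is a genuine contactomorphism of $(M,\xi)$ in the same mapping class as $\varphi$.

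To identify the component of $(\varphi, F_s)$ with that of a genuine contactomorphism in $\FCont(M,\xi)$, I would use the decomposition
\[ (\varphi, F_s) \;=\; (\psi_1^{-1},\, F_s\circ d\chi^{-1}) \cdot (\chi, d\chi), \]
which is valid because $\chi$ is contact (the middle conditions $F_0\circ d\chi^{-1}=d\psi_1^{-1}$ and $(F_1\circ d\chi^{-1})(\xi)=\xi$ are straightforward to check). The left factor is a formal contactomorphism whose underlying diffeomorphism $\psi_1^{-1}$ belongs to $\Diff_0(M, \rel \D^2_{\OT})$, so it lies in $\FCont_0(M,\xi, \rel \D^2_{\OT})$. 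The two Gray fibrations rel $\D^2_{\OT}$ fit into a commutative diagram of the form $(\Cont_0\hookrightarrow \Diff_0\to \C)\longrightarrow(\FCont_0\hookrightarrow \FDiff_0\to \FC)$; the map of bases is a weak equivalence by Theorem \ref{thm:EliashbergOT} and the map of totals is a weak equivalence because $\Diff\hookrightarrow \FDiff$ is one. The Five Lemma on long exact sequences then forces the map of fibers $\Cont_0\hookrightarrow \FCont_0$ (rel $\D^2_{\OT}$) to be a weak equivalence as well. Hence the left factor is path-connected in $\FCont_0(M,\xi, \rel\D^2_{\OT})$, and therefore in $\FCont(M,\xi)$, to some $(\sigma, d\sigma)$ with $\sigma\in \Cont_0(M,\xi, \rel \D^2_{\OT})$. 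Multiplying by $(\chi, d\chi)$ gives a path in $\FCont(M,\xi)$ from $(\varphi, F_s)$ to the genuine contactomorphism $\sigma\circ\chi\in\Cont(M,\xi)$.

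The main technical points I expect to need care are the rel $\D^2_{\OT}$ bookkeeping: that $F_s(\xi)$ is literally constant near $\D^2_{\OT}$ (which is immediate from the compact-support convention on formal diffeomorphisms), that the Gray isotopy $\psi_t$ can be taken rel $\D^2_{\OT}$ (which is the relative clause of Lemma \ref{lem:Gray}), and that the Five Lemma really applies to the relative Gray fibrations. The key conceptual input is Theorem \ref{thm:EliashbergOT}, which is what converts the formal homotopy $F_s(\xi)$ into a genuine path of contact structures; after this, Gray stability and elementary homotopy theory of fibrations do the rest.
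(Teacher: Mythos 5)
Your proof is correct but takes a genuinely different route from the paper's. The paper proves the lemma \emph{directly}: it treats the path $s\mapsto F_s(\xi)$ as a $[0,1]$-parametric family in $\FC(M;\xi)$ with contact endpoints, applies the parametric form of Theorem~\ref{thm:EliashbergOT} (relative to $\{0,1\}\times[0,1]$) to deform it into a genuine path of contact structures, lifts this to a two-parameter homotopy of the bundle maps $F^t_s$, and then plugs in the Gray isotopy of the resulting path $\xi^1_s$ to hand-construct the final homotopy from $(\varphi,F_s)$ to a genuine contactomorphism. No Five Lemma and no group decomposition appear; the Five Lemma is reserved for Proposition~\ref{prop:ContactomorphismsOTAtInfinity}, where the paper shows $\Cont_0\hookrightarrow\FCont_0$ is a weak equivalence via the Gray fibrations and then quotes the present lemma to pass from the identity component to all of $\FCont$. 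You invert this order: you use the non-parametric Eliashberg theorem and Gray stability only to produce a genuine contactomorphism $\chi$ with $\psi_1^{-1}\circ\chi=\varphi$, then the group-theoretic decomposition $(\varphi,F_s)=(\psi_1^{-1},F_s\circ d\chi^{-1})\cdot(\chi,d\chi)$ to move everything into $\FCont_0(M,\xi,\rel\D^2_{\OT})$, and finally invoke the very Five Lemma argument from Proposition~\ref{prop:ContactomorphismsOTAtInfinity} to land in $\Cont_0$. Both approaches are correct and use the same conceptual ingredients; yours is slightly more machinery-heavy but cleanly modular, and you are more explicit than the paper about the choice of $\D^2_{\OT}\subseteq M\setminus\support(\varphi)$ and the $\rel\D^2_{\OT}$ bookkeeping that makes Theorem~\ref{thm:EliashbergOT} applicable. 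There is no circularity, since the Five Lemma step depends only on Theorem~\ref{thm:EliashbergOT} and Lemma~\ref{lem:Gray}, not on the present lemma. One small point worth checking when you write it up: the left factor $(\psi_1^{-1},F_s\circ d\chi^{-1})$ lies in $\FDiff_0$ because $\Diff\hookrightarrow\FDiff$ is a deformation retraction (retract $F_s$ to $F_0$), so membership of the underlying diffeomorphism $\psi_1^{-1}$ in $\Diff_0$ does imply the formal pair sits in the identity component; and since $\FDiff_0$ is a union of path components, $\FCont_0=\FDiff_0\cap\FCont$ is likewise open and closed in $\FCont$, so the path produced by the Five Lemma really is a path in $\FCont$.
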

\begin{proof}
    Let $(\varphi,F_s)\in \FCont(M,\xi)$ be a formal contactomorphism. Consider the homotopy of formal contact structures $\xi_s=F_s(\xi)$, $s\in[0,1]$. Note that $\xi_0=\varphi_* \xi$ and $\xi_1=\xi$ are contact structures. By Theorem \ref{thm:EliashbergOT} there exists a homotopy of formal contact structures $\xi^t_s$, $(s,t)\in[0,1] \times [0,1]$, such that 
  \begin{itemize}
      \item [(i)] $\xi^t_s=\xi_s$ for $(s,t)\in [0,1]\times\{0\}\cup \{0,1\}\times[0,1]$.
      \item [(ii)] $\xi^1_s$ is contact. 
  \end{itemize}

  Hence, we can find a homotopy of formal contactomorphisms $(\varphi,F^t_s)\in \FCont(M,\xi)$, $(s,t)\in[0,1]\times[0,1]$, such that 
  \begin{itemize}
      \item [(a)] $F^t_s(\xi)=\xi^t_s$, for $(s,t)\in[0,1]\times[0,1]$. 
      \item [(b)] $F^t_s=F_s$ for $(s,t)\in [0,1]\times\{0\}\cup \{0,1\}\times[0,1]$.
  \end{itemize}
  Since $\xi^1_s$ is contact we can apply Gray Stability to find a smooth isotopy $H_s:M\rightarrow M$, $s\in[0,1]$, such that $H_1=\Id$ and $(H_s)_* \xi^1_s=\xi^1_1=\xi$. Extend the previous homotopy over $t\in[1,2]$ via the formula
  \[ (\varphi_t, F^t_s)=(H_{2-t}\circ \varphi, dH_{2-t} \circ F^1_{(2-t)s})\in \FCont(M,\xi). \]

  For $t=2$ we have that $(\varphi_2,F^2_s)=(H_0\circ \varphi, dH_0\circ F^1_0)=(H_0\circ \varphi, dH_0\circ d\varphi)$ is a genuine contactomorphism. This concludes the argument.
\end{proof}

Finally, we treat the parametric case. The following is a consequence of Theorem \ref{thm:EliashbergOT} and Lemma \ref{lem:Gray}. The case of the complement of an overtwisted disk in the $3$-sphere was treated by Dymara in \cite{Dymara:ContRelOT}.

\begin{proposition}\label{prop:ContactomorphismsOTAtInfinity}
    Let $(M,\xi)$ be a contact $3$-manifold overtwisted at infinity. Then, the inclusion $\Cont(M,\xi)\hookrightarrow \FCont(M,\xi)$ is a weak homotopy equivalence.
\end{proposition}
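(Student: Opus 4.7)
The plan is to promote Lemma \ref{lem:Pi0Cont} to the parametric setting by reducing every compact family of formal contactomorphisms to a family fixed near an overtwisted disk, where the parametric version of Theorem \ref{thm:EliashbergOT} applies directly. The ingredients are exactly the same as in the proof of Lemma \ref{lem:Pi0Cont} (Eliashberg's $h$-principle plus Gray stability), now assembled through the Gray fibrations of Lemma \ref{lem:Gray} and compared via a five lemma.

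To start, it suffices to show that for every compact CW pair $(Y, Y_0)$, any continuous map of pairs $\Phi: (Y, Y_0) \to (\FCont(M,\xi), \Cont(M,\xi))$ is homotopic rel $Y_0$ to a map into $\Cont(M,\xi)$. Compactness of $Y$ together with the support condition on formal contactomorphisms yields a compact set $K \subseteq M$ outside which every $\Phi(y)$ equals $(\Id, d\Id)$. Invoking the overtwisted-at-infinity hypothesis, I would choose an embedded overtwisted disk $\D^2_{\OT} \subseteq M \setminus K$; the family $\Phi$ then factors through the subspace $\FCont(M,\xi; \rel \D^2_{\OT})$, with $\Phi|_{Y_0}$ factoring through $\Cont(M,\xi; \rel \D^2_{\OT})$.

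The remaining task is to show that the inclusion
\[
\Cont(M,\xi; \rel \D^2_{\OT}) \hookrightarrow \FCont(M,\xi; \rel \D^2_{\OT})
\]
is a weak homotopy equivalence; since both are topological groups, it suffices to check $\pi_0$ and identity components. For $\pi_0$, the proof of Lemma \ref{lem:Pi0Cont} transcribes verbatim, as Theorem \ref{thm:EliashbergOT} is already formulated relative to the disk and the final Gray-stability isotopy $H_s$ is supported away from $\D^2_{\OT}$ (since the interpolating family of contact structures agrees with $\xi$ on $\Op(\D^2_{\OT})$). For the identity components, I would compare the Gray fibrations of Lemma \ref{lem:Gray} in their versions relative to $\D^2_{\OT}$:
\[
\Cont_0(M,\xi) \to \Diff_0(M) \to \C(M;\xi), \qquad \FCont_0(M,\xi) \to \FDiff_0(M) \to \FC(M;\xi),
\]
all relative to $\D^2_{\OT}$. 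The middle vertical map $\Diff_0 \hookrightarrow \FDiff_0$ is a standard weak equivalence, and the right vertical map $\C \hookrightarrow \FC$ is the one supplied by Theorem \ref{thm:EliashbergOT}. A five-lemma comparison of the associated long exact sequences of homotopy groups then shows that the left vertical map is also a weak equivalence, which combined with the reduction above yields the desired homotopy of $\Phi$.

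The main technical point I anticipate is the common-support extraction in the reduction step: verifying that a continuous family $y \mapsto \Phi(y)$, with $y$ ranging over a compact polyhedron and values in compactly supported formal contactomorphisms, has supports contained in a single compact set. This is standard for the direct-limit topology on compactly supported (formal) diffeomorphism groups but is the one genuinely non-formal piece of the argument that warrants careful verification; everything else is a parametric repackaging of the proof of Lemma \ref{lem:Pi0Cont}.
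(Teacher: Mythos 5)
Your proposal is correct and takes essentially the same route as the paper: a five-lemma comparison of the Gray fibrations of Lemma \ref{lem:Gray}, with the right-hand column supplied by Theorem \ref{thm:EliashbergOT}, for the identity components, combined with (a relative form of) Lemma \ref{lem:Pi0Cont} for $\pi_0$. The only difference is that you make explicit the reduction to the subgroups fixed near an overtwisted disk chosen outside a common compact support of the family, a step the paper leaves implicit in its compact-support conventions when applying Theorem \ref{thm:EliashbergOT} to $\C(M;\xi)\hookrightarrow \FC(M;\xi)$ in the overtwisted-at-infinity setting.
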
 
\begin{proof}
Consider the commuting diagram 
    \begin{displaymath} 
    \xymatrix@M=10pt{
    \Cont_0(M,\xi)\  \ar@{^{(}->}[d]\ar@{^{(}->}[r]  & \Diff_0(M) \ar[r] \ar@{^{(}->}[d] & \C(M;\xi)   \ar@{^{(}->}[d] \\
    \FCont_0(M,\xi) \ar@{^{(}->}[r] & \FDiff_0(M) \ar[r] &  \FC(M;\xi) }
  \end{displaymath}
  in which the rows are the Gray fibrations $G$ and $FG$ from Lemma \ref{lem:Gray} and the columns are the natural inclusions. It follows from Theorem \ref{thm:EliashbergOT} and the Five Lemma that the inclusion $\Cont_0(M,\xi)\hookrightarrow \FCont_0(M,\xi)$ is a weak homotopy equivalence. The result follows now by Lemma \ref{lem:Pi0Cont}
\end{proof}

\section{Strongly overtwisted surfaces}\label{sec:StronglyOvertwistedSurfaces}

In this Section, we prove the $h$-principle for strongly overtwisted surfaces: Theorem \ref{thm:H-PrincipleOvertwistedSurfaces}. To state this result we introduce the notion of germs of (formal) isocontact embeddings near a strongly overtwisted surface. Then, we review the Microfibration Trick (Theorem \ref{thm:microfibration}) introduced in \cite{FMP:Tight} that coupled with Theorems \ref{thm:EliashbergOT} and \ref{thm:OTatInfinity} imply the aforementioned $h$-principle.

\subsection{Formal $\F$-embeddings}\label{subsec:Germs}

Consider an $\R$-invariant contact structure $(\Sigma\times\R,\eta)$, where $\Sigma$ is a compact, connected and oriented surface. In particular, $\Sigma=\Sigma\times\{0\}$ is convex for $\eta$. We will denote by $\mathcal{F}$ the singular foliation determined by $T\Sigma \cap \eta$ which is known as \em characteristic foliation \em of $\Sigma$. The contact structure $\eta$ is only determined by $\F$ up to isotopy. However, from now on, we will write $\xi_{\F}=\eta$ and fix this contact structure for the remaining of the Section. 

Given a contact $3$-manifold we will denote by $\Emb^{\F}(\Sigma,(M,\xi))$ the space of embeddings of $\Sigma\rightarrow (M,\xi)$ which induce the characteristic foliation $\F$ on $\Sigma$. In the case that $\partial \Sigma\neq \emptyset$ is non-empty we will assume that all our embeddings coincide with a pre-assigned embedding near $\partial \Sigma$. We will refer to these embeddings as $\F$-embeddings. To define the formal counterpart of an $\F$-embedding it will suitable for our purposes to speak about germs of isocontact embeddings near a surface. 

Let us first state the general abstract definition. A formal embedding of $\Sigma\times(-\varepsilon,\varepsilon)$ into a $3$-manifold $M$ is a pair $(E,F_s)$ such that 
\begin{itemize}
    \item [(i)] $E:\Sigma\times(-\varepsilon,\varepsilon)\rightarrow M$ is an embedding, 
    \item [(ii)] $F_s:T(\Sigma\times(-\varepsilon,\varepsilon))\rightarrow TM$, $s\in[0,1]$, is a homotopy of bundle isomorphisms covering $E$ such that $F_0=dE$. 
\end{itemize}

We say that two formal embeddings $(E,F_S):\Sigma\times(-\varepsilon,\varepsilon)\rightarrow M$ and $(\hat{E},\hat{F}_s):\Sigma\times(-\hat{\varepsilon},\hat{\varepsilon})\rightarrow M$ are \em equivalent \em if there exists $0<\delta<\min(\varepsilon,\hat{\varepsilon})$ such that $(E,F_s)_{|\Sigma\times(-\delta,\delta)}=(\hat{E},\hat{F}_s)_{|\Sigma\times(-\delta,\delta)}$. Notice that, in particular, $(E,F_s)_{|\Sigma\times\{0\}}=(\hat{E},\hat{F}_s)_{|\Sigma\times\{0\}}$. We will refer to the equivalence class of $(E,F_s)$ under this relation as a \em germ \em of formal embedding over $\Sigma$. Making an abuse of notation we will write $(E,F_s)$ to denote the equivalence class defined by $(E,F_s)$. The space of such germs of formal embeddings is denoted by $\FEmb^{\Germ}(\Sigma,M)$. As customary, in the case that $\partial \Sigma\neq \emptyset$ we require all our germs to coincide near $\partial \Sigma\times\{0\}\subseteq \Sigma\times(-\varepsilon,\varepsilon)$ with some preassigned (germ of) embedding that will be clear from the context. 

\begin{remark}
    We treat spaces of germs as quasi-topological spaces in the same way that spaces of germs of sections are treated by Gromov \cite[pp.35-36]{Gromov:Partial}. Importantly for us, for every compact parameter space $K$, a continuous map $K\rightarrow \FEmb^{\Germ}(\Sigma,M)$ is, by the definition of quasi-topology, represented by a continuous family $K\rightarrow \FEmb(\Sigma\times(-\varepsilon,\varepsilon),M)$ of formal embeddings for some $\varepsilon>0$. This allows to use standard homotopy theoretic notions in this context, as weak homotopy equivalences between spaces of germs of embeddings over $\Sigma$.
\end{remark}

We are interested in the spaces of germs of (formal) isocontact embeddings over a surface with characteristic foliation $\F$. The definitions are as follows: 

\begin{definition}
    \begin{itemize} 
    \item [(i)] A \em formal isocontact $\F$-embedding \em into $(M,\xi)$ is a formal embedding $(E,F_s)$ into $M$ such that $F_1(\xi_{\F})=\xi$. An \em isocontact $\F$-embedding \em into $(M,\xi)$ is a formal isocontact $\F$-embedding of the form $E=(E,dE)$.
     \item [(ii)] A \em germ of a formal isocontact $\F$-embedding \em into $(M,\xi)$ is the equivalence class of a formal isocontact embedding $(E,F_s)$. 
    \item [(iii)] A \em germ of an isocontact $\F$-embedding \em into $(M,\xi)$ is the equivalence class of an isocontact embedding $E=(E,dE)$.
    \end{itemize}
\end{definition}

The space of germs of formal isocontact $\F$-embeddings is denoted by $\FEmb^{\F,\Germ}(\Sigma,(M,\xi))$. In the case $\partial \Sigma\neq \emptyset$ we assume that all our (germs of) formal isocontact $\F$-embeddings coincide and are contact near $\partial\Sigma\times \{0\}\subseteq \Sigma \times(-\varepsilon,\varepsilon)$. The subspace of germs of isocontact $\F$-embeddings is denoted by $\Emb^{\F,\Germ}(\Sigma,(M,\xi))$.

Notice that the natural projection 
$ \pr:\Emb^{\F,\Germ}(\Sigma,(M,\xi))\rightarrow \Emb^{\F}(\Sigma,(M,\xi)) $
is a Serre fibration. The fiber is non-empty \cite{Giroux:Convexity} and, in fact, weakly contractible.

\subsection{The Microfibration trick} 

We review the microfibration technique introduced in \cite{FMP:Tight} in the context of germs of embeddings over a surface. 

Fix a distance $d$ on $M$. For $\varepsilon>0$ and every germ of formal embedding $(E,F_s)\in \FEmb^{\Germs}(\Sigma,M)$ define 
\[ \mathcal{U}_{\varepsilon} (E,F_s):=\{(j,G_s)\in\FEmb^{\Germ}(\Sigma,M): \forall p\in \Sigma, d(j(p,0),E(\Sigma\times\{0\}))<\varepsilon \}.\]

\begin{theorem}[Microfibration Trick \cite{FMP:Tight}]\label{thm:microfibration}
    Let $X\subseteq Y\subseteq \FEmb^{\Germ}(\Sigma,M)$ be two subspaces of formal germs. Assume that 
    \begin{itemize}
        \item [(D.P)] Density Property: For every $\varepsilon>0$ and $(E,F_s)\in Y$ the space $\mathcal{U}_{\varepsilon}(E,F_s)\cap X\neq \emptyset$ is non-empty.
        \item [(L.E.P)] Local Equivalence Property: For every $(E,F_s)\in Y$ there exists some $\varepsilon(E,F_s)>0$ such that 
        $$ \mathcal{U}_{\varepsilon}(E,F_s)\cap X\hookrightarrow \mathcal{U}_{\varepsilon}(E,F_s)\cap Y $$
        is a weak homotopy equivalence for every $0<\varepsilon <\varepsilon(E,F_s)$.
    \end{itemize}
    Then, the inclusion $X\hookrightarrow Y$ is a $C^0$-dense weak homotopy equivalence. That is, for every $\varepsilon>0$, compact $CW$-pair $(K,G)$ and continuous map $H:(K,G)\rightarrow (Y,X)$; there exists a homotopy $H_t:(K,G)\rightarrow (Y,X)$, $t\in[0,1]$, such that 
    \begin{itemize}
        \item [(i)] $H_0=H$,
        \item [(ii)] $H_t(g)=H(g)$ for all $(t,g)\in[0,1]\times G$, 
        \item [(iii)] $\image(H_1)\subseteq X$ and 
        \item [(iv)] $H_t(p)\in \mathcal{U}_{\varepsilon}(H(p))$ for all $(t,p)\in[0,1]\times K$.
    \end{itemize}
\end{theorem}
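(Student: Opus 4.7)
The strategy is the classical cellular-induction argument that upgrades the \emph{local} equivalence supplied by L.E.P.\ and the \emph{density} supplied by D.P.\ into a global $C^0$-dense weak equivalence on the pair. Given a compact CW-pair $(K,G)$, a map $H\colon (K,G)\to (Y,X)$, and a tolerance $\varepsilon>0$, the plan is to construct $H_t$ by modifying $H$ skeleton by skeleton over a sufficiently fine subdivision of $K$.

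First, I would extract uniform local data from compactness of $K$. For each $p\in K$, L.E.P.\ supplies a positive radius $\varepsilon(H(p))$; after replacing it by $\min(\varepsilon,\varepsilon(H(p)))$ and using continuity of $H$ together with openness of the conditions $\mathcal{U}_{\delta}(\cdot)$, I would choose a CW-subdivision of $K$ fine enough that for every open cell $\sigma$ there is a base point $p_{\sigma}\in\sigma$ and a radius $\eta_{\sigma}<\min(\varepsilon,\varepsilon(H(p_{\sigma})))/4$ with $H(\overline{\sigma})\subseteq \mathcal{U}_{\eta_{\sigma}}(H(p_{\sigma}))$, and such that the larger ball $\mathcal{U}_{\varepsilon(H(p_{\sigma}))}(H(p_{\sigma}))$ contains the image of every cell whose closure meets $\overline{\sigma}$. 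This arranges that while working on a cell I always have a single L.E.P.\ neighborhood that also contains the already-modified data on neighboring cells.

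Then I would run an induction over the skeleta $G\cup K^{(n)}$. The base case $n=0$ uses D.P.\ to move each new vertex value into $X\cap \mathcal{U}_{\eta_{v}}(H(v))$, joining to $H(v)$ by a path in $Y$ (available because L.E.P.\ implies $\pi_{0}$-surjectivity of the inclusion inside the larger $\mathcal{U}_{\varepsilon(H(v))}$-ball). For the inductive step on an $(n{+}1)$-cell $\sigma$, the map $H_{n}$ satisfies $H_{n}(\partial\sigma)\subseteq X$ and $H_{n}(\overline{\sigma})\subseteq Y$, and by the subdivision choice both live in the single neighborhood $\mathcal{U}_{\varepsilon(H(p_{\sigma}))}(H(p_{\sigma}))$. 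Since the inclusion $X\cap \mathcal{U}_{\varepsilon(H(p_{\sigma}))}\hookrightarrow Y\cap \mathcal{U}_{\varepsilon(H(p_{\sigma}))}$ is a weak homotopy equivalence, the relative lifting problem for the pair $(\overline{\sigma},\partial\sigma)$ can be solved rel $\partial\sigma$ inside $X$. Assembling these cellwise homotopies and parametrizing them in the $t$-direction yields $H_{t}$; the $C^{0}$-control $H_{t}(p)\in \mathcal{U}_{\varepsilon}(H(p))$ is automatic because every modification stays inside the $\eta_{\sigma}$-ball around $H(p_{\sigma})$ and $\eta_{\sigma}<\varepsilon$.

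The main obstacle is the bookkeeping of radii across skeleta: when lifting on a cell $\sigma$ I must reuse the lift on $\partial\sigma$ already produced for neighboring cells, possibly at different base points, and I need all of these data to sit in a common L.E.P.\ neighborhood in which the equivalence is actually available. This is why the subdivision must be refined enough that a cell together with all of its neighbors fits into a single L.E.P.\ ball, and is the key use of compactness of $K$; once that is arranged, the inductive step and the $C^{0}$-estimate run routinely.
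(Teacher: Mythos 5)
The paper itself gives no proof of Theorem \ref{thm:microfibration}: it is quoted from \cite{FMP:Tight}, where, as the name suggests, the argument is organized around a microfibration-style mechanism rather than a direct skeletal compression. So your route is genuinely different: you replace that machinery by the elementary compression criterion for weak homotopy equivalences (a map of pairs $(\D^m,\partial\D^m)\to(\mathcal{U}\cap Y,\mathcal{U}\cap X)$ compresses rel boundary once the inclusion is a weak equivalence), applied cell by cell inside L.E.P.\ balls, with the $C^0$-density coming from the fact that each compression homotopy stays inside the ball in which it is performed. This is a legitimate and more self-contained scheme; what the packaged formulation of \cite{FMP:Tight} buys is precisely that it hides the radius bookkeeping that your version must do by hand.

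That bookkeeping is where your sketch has a concrete gap. You ask for a subdivision so fine that every cell $\sigma$ contains a base point $p_\sigma\in\sigma$ with $H(\overline{\sigma})\subseteq\mathcal{U}_{\eta_\sigma}(H(p_\sigma))$ and $\eta_\sigma<\min(\varepsilon,\varepsilon(H(p_\sigma)))/4$. But L.E.P.\ only provides \emph{some} positive radius at each point, with no semicontinuity or uniform lower bound along $H(K)$; refining the subdivision changes the base points and hence the radii you must beat, so the required mesh is a moving target and such a subdivision need not exist (nothing prevents $\varepsilon(H(p))$ from decaying along a sequence far faster than the oscillation of $H$, so that small cells near the limit point contain no admissible base point). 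The standard repair is a Lebesgue-number argument with a \emph{fixed finite} net chosen before subdividing: pick $p_1,\dots,p_N\in K$ and neighborhoods $V_i\ni p_i$ with $H(V_i)\subseteq\mathcal{U}_{\varepsilon_i/8}(H(p_i))$, where $\varepsilon_i=\min(\varepsilon,\varepsilon(H(p_i)))$, subdivide so that each closed star lies in some $V_i$, and run every compression inside one of the finitely many balls $\mathcal{U}_{\varepsilon_i}(H(p_i))$; the base points need not lie in the cells. Relatedly, the estimate ``$\eta_\sigma<\varepsilon$'' alone yields neither condition (iv) nor the containments needed at later stages: the perturbations compound over the $\dim K+1$ skeletal steps (each homotopy-extension and compression moves points of higher cells), and (iv) measures closeness to $H(p)$ rather than to the base value, so the mesh distortion of $H$ enters via the one-sided triangle inequality for the sets $E(\Sigma\times\{0\})$. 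Budgeting all stage tolerances below something like $\tfrac{1}{4(\dim K+2)}\min_i\varepsilon_i$ closes the induction; with these two repairs your argument goes through.
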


\subsection{The $h$-principle for strongly overtwisted surfaces}

The main result of this Section is the following:

\begin{theorem}\label{thm:H-PrincipleOvertwistedSurfaces}
    Let $\Sigma\subseteq (M,\xi)$ be an embedded convex strongly overtwisted surface with characteristic foliation $\F$. Then, the inclusion 
    $$ i:\Emb^{\F, \Germ}(\Sigma,(M,\xi))\rightarrow \FEmb^{\F,\Germ}(\Sigma,(M,\xi))$$
    is a $C^0$-dense weak homotopy equivalence.
\end{theorem}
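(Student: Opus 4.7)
The plan is to verify the Density Property (D.P) and the Local Equivalence Property (L.E.P) of the Microfibration Trick (Theorem \ref{thm:microfibration}) for the pair $X = \Emb^{\F,\Germ}(\Sigma,(M,\xi)) \subseteq Y = \FEmb^{\F,\Germ}(\Sigma,(M,\xi))$. Both properties are designed to reduce to Eliashberg's overtwisted $h$-principle (Theorem \ref{thm:EliashbergOT}) via the key observation of Theorem \ref{thm:OTatInfinity}: any tubular slab around a strongly overtwisted surface is overtwisted at infinity, which then makes Proposition \ref{prop:ContactomorphismsOTAtInfinity} available.

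For (D.P), I would begin with a formal germ $(E,F_s) \in Y$ and work on a small slab $U = \Sigma\times(-\delta,\delta)$. Since $E$ is an embedding between $3$-manifolds, $E^*\xi$ is a genuine contact structure on $U$; and the family $\eta_s = F_s^{-1}(\xi)$ is a plane-field homotopy from $E^*\xi$ to $\xi_\F$, which preserves the Euler class of $\Sigma\times\{0\}$ (and, in the boundary case, preserves $\tb(\partial\Sigma)$ and $|\rot_\Sigma(\partial\Sigma)|$ by the germ hypothesis $F_s=dE$ near $\partial\Sigma\times\{0\}$). Hence $\Sigma\times\{0\}$ remains strongly overtwisted in $(U,E^*\xi)$, and both $(U,\xi_\F)$ and $(U,E^*\xi)$ are overtwisted by Theorem \ref{thm:OTatInfinity}. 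Theorem \ref{thm:EliashbergOT} then promotes $\eta_s$ to an isotopy of genuine contact structures relative to the boundary, and Gray Stability yields a diffeomorphism $\Phi:U\to U$ with $\Phi^*(E^*\xi) = \xi_\F$, so that $E\circ\Phi$ is a genuine isocontact germ. Shrinking $\delta$ and using the $C^0$-density form of Theorem \ref{thm:EliashbergOT}, the image $(E\circ\Phi)(\Sigma\times\{0\})$ can be arranged to lie arbitrarily $C^0$-close to $E(\Sigma\times\{0\})$.

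For (L.E.P), given $(E,F_s)\in Y$ I would choose $\varepsilon(E,F_s)$ small enough that the $\varepsilon$-neighborhood of $E(\Sigma\times\{0\})$ sits inside a fixed tubular slab $W$ around $E(\Sigma\times\{0\})$, so that every germ in $\mathcal U_\varepsilon(E,F_s)$ factors through $W$, where by Theorem \ref{thm:OTatInfinity} the contact manifold $(W,\xi|_W)$ is overtwisted at infinity. Fixing a reference genuine germ $\iota \in \mathcal U_\varepsilon(E,F_s)\cap X$ produced by (D.P), post-composition $\varphi\mapsto\varphi\circ\iota$ defines Serre fibrations
\[
\Cont_0(W,\xi|_W)\longrightarrow \Emb^{\F,\Germ}(\Sigma,(W,\xi|_W)),\qquad \FCont_0(W,\xi|_W)\longrightarrow \FEmb^{\F,\Germ}(\Sigma,(W,\xi|_W)),
\]
whose fibers are the corresponding stabilizers of $\iota$, identified with the (formal) contactomorphism groups of an overtwisted-at-infinity open set (the complement of a small neighborhood of $\iota(\Sigma\times\{0\})$ in $W$). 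Proposition \ref{prop:ContactomorphismsOTAtInfinity} applied to $W$ and to this complement, combined with the Five Lemma on the resulting map between fibration sequences, delivers the weak equivalence $\mathcal U_\varepsilon(E,F_s)\cap X \hookrightarrow \mathcal U_\varepsilon(E,F_s)\cap Y$.

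The main obstacle I anticipate is the $C^0$-size control in (D.P): a priori Gray Stability may produce a diffeomorphism that moves $\Sigma\times\{0\}$ out of a prescribed tube, defeating $C^0$-density. Overcoming this should require combining a shrinking argument at the level of germs with the $C^0$-dense form of Eliashberg's $h$-principle, so that the connecting contact isotopy stays within a controlled neighborhood of $\Sigma\times\{0\}$. A more technical but routine point is verifying that the evaluation maps in (L.E.P) are genuine Serre fibrations with the claimed stabilizers, which depends on (formal) contact isotopy extension in an overtwisted-at-infinity setting.
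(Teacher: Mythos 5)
Your overall strategy is the right one (Microfibration Trick, reduced via Theorem \ref{thm:OTatInfinity} to Eliashberg's $h$-principle and Proposition \ref{prop:ContactomorphismsOTAtInfinity}), and it matches the paper's in spirit, but there is a genuine gap in how you invoke Eliashberg's $h$-principle in the Density Property step.

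In your (D.P) argument you work entirely inside the germ's own domain $U=\Sigma\times(-\delta,\delta)$ with the pulled-back path of plane fields $\eta_s=F_s^{-1}(\xi)$ running from $E^*\xi$ to $\xi_{\F}$. You then claim that ``Theorem \ref{thm:EliashbergOT} promotes $\eta_s$ to an isotopy of genuine contact structures relative to the boundary.'' But Theorem \ref{thm:EliashbergOT} requires a \emph{fixed} germ of overtwisted disk, i.e.\ the family must be constant and contact on a neighborhood of some $\D^2_{\OT}$. In your setup there is no such region: $\eta_s$ has no reason to be $s$-independent anywhere on $U$ (the only place the germ is constrained is near $\partial\Sigma$, and if $\Sigma$ is closed there is nothing constrained at all), and the two endpoints $E^*\xi$ and $\xi_\F$ need not agree near any common overtwisted disk. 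So Eliashberg's theorem does not apply as stated. This is precisely the point that the paper's Proposition \ref{prop:LocalProblem} is designed to handle: it pushes the plane fields $F^k_s(\xi_\F)$ into a fixed ambient slab $\Sigma\times(-1,1)$, extends them so that they \emph{coincide with the ambient contact structure $\xi$} on a collar $U^\delta$ near the ends and near $\partial\Sigma$, and then uses Theorem \ref{thm:OTatInfinity} to place an overtwisted disk for $\xi$ inside $U^\delta$. That fixed disk is what makes Theorem \ref{thm:EliashbergOT} applicable, and it also furnishes the $C^0$-control you correctly flag as a worry (since one applies the local proposition to arbitrarily small tubes around $E(\Sigma\times\{0\})$ rather than fighting Gray stability afterwards). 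Without an analogous ``fixed at infinity'' anchor, your (D.P) step does not go through.

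Your (L.E.P) argument diverges from the paper's more than it needs to. The paper proves (D.P) and (L.E.P) simultaneously through a single relative homotopy lifting statement for pairs $(K,G)$, which avoids having to discuss post-composition fibrations. Your fibration route $\Cont_0(W,\xi|_W)\to\Emb^{\F,\Germ}(\Sigma,(W,\xi|_W))$ can in principle be made to work, but two things would need to be supplied: (a) surjectivity of the post-composition map onto the relevant $\pi_0$-orbit, which already presupposes the $\pi_0$-statement you are trying to prove (effectively (D.P) again), and (b) a formal contact isotopy extension theorem identifying the fiber of the formal fibration with $\FCont_0$ of the complement of a neighborhood of $\iota(\Sigma\times\{0\})$. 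Neither is insurmountable, but both are extra work that the paper's unified Proposition \ref{prop:LocalProblem} sidesteps. The essential missing idea in your proposal is the extension of the family of plane fields to a genuine contact structure near an overtwisted region (here, near the ends of the slab), which is what anchors Eliashberg's $h$-principle and resolves both properties at once.
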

\begin{remark}\label{rmk:PropertiesH-Principle}
  The $h$-principle is also \em relative to domain. \em By this we mean that if $A\subseteq \Sigma$ is a closed subset, then every compact family of germs of formal isocontact $\F$-embeddings $(E^k,F_s^k)$ that is isocontact over $\Op(A)\subseteq \Sigma$ can be homotoped into a family of germs of isocontact $\F$-embeddings relative to $\Op(A)$. This follows  from the proof and the fact that Eliashberg's $h$-principle (Theorem \ref{thm:EliashbergOT}) is relative to every compact domain in the overtwisted at infinity case.
\end{remark}
\begin{proof}
    Let $X=\Emb^{\F,\Germ}(\Sigma,(M,\xi))$ and $Y=\FEmb^{\F,\Germ}(\Sigma,(M,\xi))$. After Proposition \ref{prop:LocalProblem} below we may apply Theorem \ref{thm:microfibration} to  conclude. 
\end{proof}

\begin{proposition}\label{prop:LocalProblem}
Let $(\Sigma\times(-1,1), \xi)$ be a contact $3$-manifold such that 
\begin{itemize}
    \item [(i)] $\Sigma\times\{0\}$ is strongly overtwisted,
    \item [(ii)] $(\Op(\partial \Sigma)\times(-1,1),\xi)=(\Op(\partial \Sigma)\times(-1,1),\xi_{\F})$ and
    \item [(iii)] One of the following two conditions is satisfied
\begin{itemize}
    \item [(a)] If $\Sigma$ is closed then $e(\xi)=e(\xi_{\F})$.
    \item [(b)] If $\Sigma$ has non-empty boundary then $\tb(\partial \Sigma,\xi)=\tb(\partial \Sigma, \xi_{\F})$ and $\rot_{\Sigma}(\partial \Sigma,\xi)=\rot_{\Sigma}(\partial\Sigma,\xi_{\F})$.
\end{itemize} 
\end{itemize}

Then, the following holds: 
\begin{itemize}
    \item [(D.P)] The space $\Emb^{\F,\Germ}(\Sigma,(\Sigma\times(-1,1),\xi))$ is non-empty. 
    \item [(L.E.P)] The inclusion $$ \Emb^{\F,\Germ}(\Sigma,(\Sigma\times(-1,1),\xi))\hookrightarrow \FEmb^{\F,\Germ}(\Sigma,(\Sigma\times(-1,1),\xi)) $$
    is a weak homotopy equivalence.
\end{itemize}
\end{proposition}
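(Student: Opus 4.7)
The plan is to deduce both (D.P) and (L.E.P) from Eliashberg's $h$-principle (Theorem \ref{thm:EliashbergOT}) combined with Proposition \ref{prop:ContactomorphismsOTAtInfinity}, crucially using that $(\Sigma \times (-1,1), \xi)$ is overtwisted at infinity by Theorem \ref{thm:OTatInfinity}.

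For (D.P): by obstruction theory for cooriented plane fields on an oriented $3$-manifold, once the germ of the plane field is prescribed on $\Op(\partial \Sigma) \times (-1,1)$ (in the boundary case) or on a collar of the ends $\Sigma \times \{\pm 1\}$ (in the closed case), the relative homotopy class of such a plane field is captured by a single relative Euler class. Hypothesis (iii) is precisely tailored so that this invariant agrees for $\xi$ and $\xi_{\F}$, giving a homotopy $\xi_s$, $s \in [0,1]$, of cooriented plane fields from $\xi_{\F}$ to $\xi$, constant on this collar. Since Theorem \ref{thm:OTatInfinity} guarantees an overtwisted disk $\D^2_{\OT}$ inside the collar (using that $\Sigma \times \{t\}$ is strongly overtwisted for every $t \in (-1,1)$), Theorem \ref{thm:EliashbergOT} applied rel a closed subset containing $\D^2_{\OT}$ upgrades $\xi_s$ to a genuine contact isotopy; Gray stability then integrates it into a diffeomorphism $\varphi$ of $\Sigma \times (-1,1)$, identity near the collar, with $\varphi^*\xi = \xi_{\F}$. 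The germ of $\varphi$ near $\Sigma \times \{0\}$ is the desired isocontact $\F$-embedding.

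For (L.E.P): conjugating by the $\varphi$ from (D.P), we may assume $\xi = \xi_{\F}$. Consider the evaluation maps
\[
\ev:\Cont(\Sigma\times(-1,1),\xi_{\F}) \longrightarrow \Emb^{\F,\Germ}(\Sigma,(\Sigma\times(-1,1),\xi_{\F}))
\]
sending a contactomorphism to the germ of its action on $\Sigma \times \{0\}$, and its formal analogue $\operatorname{Fev}$. Contact (and formal) isotopy extension makes these maps Serre fibrations, and extension by the identity identifies their fibers with the (formal) contactomorphism groups of the complement $W := \Sigma\times(-1,1) \setminus \Sigma\times\{0\}$, supported away from the two new boundary copies of $\Sigma$. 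The complement $W$ has two components, each a neighborhood of a strongly overtwisted surface (e.g.\ $\Sigma \times \{\pm 1/2\}$) and hence overtwisted at infinity by Theorem \ref{thm:OTatInfinity}. Proposition \ref{prop:ContactomorphismsOTAtInfinity} applied to $\Sigma \times (-1,1)$ itself and to each component of $W$ produces weak equivalences on total spaces and fibers of the natural map of fibrations $\ev \to \operatorname{Fev}$; the five lemma on the long exact sequences of homotopy groups then delivers the weak equivalence $\Emb^{\F,\Germ} \to \FEmb^{\F,\Germ}$.

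The main obstacle is setting up the fibrations $\ev$ and $\operatorname{Fev}$ with the identified fibers: this requires a parametric contact isotopy extension theorem for families of germs of isocontact $\F$-embeddings, realizable as ambient contactomorphism isotopies that are the identity near the boundary. Strong overtwistedness is used decisively here, since it allows one to displace disagreements into the overtwisted ends and absorb them via Proposition \ref{prop:ContactomorphismsOTAtInfinity}. A secondary but routine technical point is verifying that hypothesis (iii) really produces the formal homotopy rel the correct collar, which is a standard Postnikov obstruction theory computation for plane fields on a $3$-manifold.
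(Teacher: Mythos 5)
Your (D.P) argument is essentially the paper's specialized to a point: hypothesis (iii) produces a formal homotopy of plane fields rel collar, which Theorem \ref{thm:EliashbergOT} (applied rel the overtwisted ends, available thanks to Theorem \ref{thm:OTatInfinity}) upgrades to a path of contact structures, and Gray stability integrates to a contactomorphism. For (L.E.P), however, you take a genuinely different route. The paper works directly with a family $(E^k,F^k_s)$ of formal germs: it extends the plane fields $F^k_s(\xi_{\F})$ by a cutoff to formal contact structures on all of $\Sigma\times(-1,1)$, applies Theorem \ref{thm:EliashbergOT} rel the overtwisted ends, updates the formal data accordingly, and finishes with Gray stability. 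You instead set up evaluation fibrations $\ev$ and $\operatorname{Fev}$ from the (formal) contactomorphism groups to the germ spaces, identify the fibers with (formal) compactly supported contactomorphism groups of the two components of $W=\Sigma\times(-1,1)\setminus(\Sigma\times\{0\})$, observe each is overtwisted at infinity, and close with Proposition \ref{prop:ContactomorphismsOTAtInfinity} plus the Five Lemma. That is a clean fibration-theoretic idea and it correctly isolates the role of overtwistedness at infinity; it is also not circular, since Proposition \ref{prop:ContactomorphismsOTAtInfinity} depends only on \cite{Eliashberg:OT} and not on any of the strongly-overtwisted results.

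Where your argument has genuine gaps: the evaluation maps $\ev$ and $\operatorname{Fev}$ are Serre fibrations only onto the orbit of the inclusion $i\colon\Sigma\hookrightarrow\Sigma\times(-1,1)$, and you do not show that these orbits exhaust (the relevant components of) $\Emb^{\F,\Germ}$ and $\FEmb^{\F,\Germ}$. A priori there could be germs whose underlying embedded surface is not isotopic rel boundary to $\Sigma\times\{0\}$, and the long exact sequence/Five Lemma argument then says nothing about those components. The paper's proof is insensitive to this: the cutoff extension of the plane fields $F^k_s(\xi_\F)$ works for any $E^k$, so the direct homotopy argument never needs the embeddings to be in the orbit of $i$. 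Second, the Serre fibration property of $\operatorname{Fev}$ requires a parametric \emph{formal} contact isotopy extension theorem for families of germs, which you flag as the main obstacle but do not supply. Both points are plausibly fixable, but as written they are missing steps that the paper's more hands-on argument sidesteps entirely.
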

\begin{proof}
Let $(K,G)$ be a compact CW-pair and $(E^k,F^k_s)\in \FEmb^{\F,\Germ}(\Sigma,(\Sigma\times(-1,1),\xi))$, $k\in K$, be a family of germs of formal isocontact $\F$-embeddings such that 
$(E^k,F^k_s)=(E^k,F^k_0)=(E^k,dE^k)\in\Emb^{\F,\Germ}(\Sigma,(\Sigma\times(-1,1),\xi)$, for all $k\in G$. 
  
The result will follow if we prove that the existence of a homotopy $$(E^{k,t},F^{k,t}_s)\in \FEmb^{\F,\Germ}(\Sigma,(\Sigma\times[-1,1],\xi)), t\in[0,2],$$ such that 
  \begin{itemize}
      \item [(i)] $(E^{k,t},F^{k,t}_s)=(E^k,F^k_s)$ for $(k,t)\in K\times\{0\}\cup G\times[0,2]$,
      \item [(ii)] $(E^{k,2},F^{k,2}_s)=(E^{k,2},dE^{k,2})\in \Emb^{\F,\Germ}(\Sigma,(\Sigma\times(-1,1),\xi))$ for all $k\in K$. 
  \end{itemize}
Indeed, the property (D.P) then will follow by taking $(K,G)=(*,\emptyset)$ since the non-emptyness of $\FEmb^{\F,\Germ}(\Sigma,(\Sigma\times(-1,1),\xi))$ follows from hypothesis (iii). On the other hand, (L.E.P) will follow by taking $(K,G)=(\D^n,\partial \D^n)$ for all $n\geq 1$. 

Fix a family of formal isocontact embeddings representing our family of germs that we still denote as \[(E^k,F^k_s):(\Sigma\times(-\varepsilon,\varepsilon),\xi_{\F})\rightarrow (\Sigma\times(-1,1),\xi).\]

Consider the family of plane fields $\xi^k_s=F^k_s(\xi_{\F})$, $(k,s)\in K\times[0,1]$, which is defined over $\Op(E^k(\Sigma\times\{0\}))$, and satisfies  $\xi^k_0=\xi_{\F}$ and $\xi^k_1=\xi$.

Let $\delta>0$ be small enough and define $U^\delta=(\Op(\partial\Sigma)\times(-1,1)) \cup (\Sigma\times (-1,-1+\delta))\cup (\Sigma\times(1-\delta,1))$. We may extend these plane fields to a family of formal contact structures on $\Sigma\times(-1,1)$, that we denote in the same way, $$\xi^k_s\in \FC(\Sigma\times(-1,1);\xi), (k,s)\in K\times[0,1],$$
such that 
  \begin{itemize}
      \item [(a)] $\xi^k_1=\xi$ is constant for all $k\in K$,
      \item [(b)] $\xi^k_{0|\Op(E^k(\Sigma\times\{0\}))}=\xi_{\F}$ is contact for all $k\in K$, 
      \item [(c)] $\xi^k_{s|U^\delta}=\xi_{|U^\delta}$ is contact and constant for all $(k,s)\in K\times[0,1]$ and 
 \end{itemize}

Since $\Sigma\times\{0\}$ is strongly overtwisted Theorem \ref{thm:OTatInfinity} implies that $(\Sigma\times(-1,1),\xi)$ is overtwisted at infinity. Therefore, we may apply Theorem  \ref{thm:EliashbergOT}, in a relative way, to find a homotopy of formal contact structures $$\xi^{k,t}_s\in \FC(\Sigma\times(-1,1);\xi), (k,t,s)\in K\times[0,1]\times[0,1],$$ such that $\xi^{k,0}_s=\xi^k_s$ for all $(k,s)\in K\times[0,1]$ and
  \begin{itemize}
      \item [(a')] $\xi^{k,t}_1=\xi$ for $(k,t)\in K\times[0,1]$,
      \item [(b')] $\xi^{k,t}_{0|\Op(E^k(\Sigma\times\{0\}))}=\xi_{\F}$ for all $(k,t)\in K\times[0,1]$,
      \item [(c')] $\xi^{k,t}_{s|U^\delta}=\xi_{|U^\delta}$ for $(k,t,s)\in K\times[0,1]\times[0,1]$; 
      \item [(d)] $\xi^{k,1}_s\in \C(\Sigma\times[-1,1];\xi)$ for all $(k,s)\in K\times [0,1]$.
  \end{itemize}
  Use this homotopy of formal contact structures to define $(E^{k,t}, F^{k,t}_s)=(E^k, F^{k,t}_s)$, $t\in[0,1]$, in such a way that $F^{k,t}_s(\xi_{\mathcal{F}})=\xi^{k,t}_s$. 
  
  Since $\xi^{k,1}_s$ is contact Gray stability applies to find a family of flows $\varphi^{k,t}\in \Diff(\Sigma\times(-1,1))$, $(k,t)\in K\times[0,1]$, such that 
  \begin{itemize}
      \item $\varphi^{k,0}=\Id$ for all $k\in K$ and
      \item $\varphi^{k,t}_*(\xi^{k,1}_{1-t})=\xi^{k,1}_1=\xi$ for all $(k,t)\in K\times[0,1]$.
    \end{itemize}
    We extend the homotopy over $t\in[1,2]$ via the expression 
  \[ (E^{k,t},F^{k,t}_{s}) = (\varphi^{k,t-1}\circ E^k, d\varphi^{k,t-1}\circ F^{k,1}_{(2-t)s})\in \FEmb^{\F,\Germs}(\Sigma,(M,\xi)), (k,t)\in K\times[1,2].\] 
  For $t=2$ we have that  $F^{k,2}_s=d\varphi^{k,1} \circ F^{k,1}_0=d\varphi^{k,1}\circ dE^k=d(\varphi^{k,1}\circ E^k).$ This implies that $(E^{k,2},F^{k,2}_s)\in\Emb^{\F,\Germ}(\Sigma,(\Sigma\times[-1,1],\xi))$ as required.
  
\end{proof}

\section{The $h$-principle for strongly overtwisted contact structures}\label{sec:proofs}

In this Section we prove the main results stated in the Introduction. 

\subsection{Proof of the main results}

We start with the proof of Theorem \ref{thm:MainContactomorphisms}.

\begin{proof}[Proof of Theorem \ref{thm:MainContactomorphisms}]
    Let $(\varphi^k,F^k_s)\in\FCont(M,\xi)$, $k\in \D^n$, be a family of formal contactomorphisms that is actually contact over $\partial \D^n$. That is, $(\varphi^k,F^k_s)=(\varphi^k,d\varphi^k)\in \Cont(M,\xi)$ for all $k\in \partial \D^n$. We will prove the existence of a homotopy $(\varphi^{k,t},F^{k,t}_s)\in \FCont(M,\xi)$, $t\in[0,3]$, such that \begin{itemize}
    \item $(\varphi^{k,0},F^{k,0}_s)=(\varphi^k,F^k_s)$ for $k\in \D^n$ and 
    \item $(\varphi^{k,t},F^{k,t}_s)=(\varphi^{k,t},d\varphi^{k,t})$ is contact for $(k,t)\in (\partial \D^n)\times[0,3]\cup \D^n \times\{3\}$.  
\end{itemize}

We build the homotopy in three steps. First, we make the family contact near a strongly overtwisted surface. Fix the inclusion $i:\Sigma\hookrightarrow (M,\xi)$ of a strongly overtwisted surface with characteristic foliation $\F$, which we may assume to be convex \cite{Giroux:Convexity,Honda:Classification}. This inclusion can be extended to an isocontact embedding $I:(\Sigma\times(-\varepsilon,\varepsilon),\xi_{\F})\rightarrow (M,\xi)$ for some $\varepsilon>0$ small enough. The post-composition 
$$ (\varphi^k\circ I, F^k_s\circ dI)\in \FEmb^{\F,\Germ}(\Sigma,(M,\xi)) $$
defines $\D^n$-family of germs of formal isocontact $\F$-embeddings bounded by a $\partial \D^n$-family of germs of isocontact $\F$-embeddings.  By Theorem \ref{thm:H-PrincipleOvertwistedSurfaces} we may find $(\varphi^{k,t},F^{k,t}_s)\in \FCont(M,\xi)$, $t\in[0,1]$, such that
\begin{itemize}
    \item $(\varphi^{k,0},F^{k,0}_s)=(\varphi^k,F^k_s)$ for all $k\in \D^n$.
    \item $(\varphi^{k,t},F^{k,t}_s)=(\varphi^{k,t},d\varphi^{k,t})\in\Cont(M,\xi)$ for all $(k,t)\in(\partial \D^n)\times[0,1]$,
    \item $(\varphi^{k,1},F^{k,1}_s)_{|\Op(i(\Sigma))}=(\varphi^{k,1},d\varphi^{k,1})_{|\Op(i(\Sigma))}$ is contact for all $K\in \D^n$.
\end{itemize}

Now, we fix the strongly overtwisted surface $i(\Sigma)$. There is a well-defined $\D^n$-family of $\F$-embeddings \[\varphi^{k,1}\circ i\in \Emb^{\F}(\Sigma,(M,\xi)), k\in \D^n\]
The contact isotopy extension theorem for surfaces with fixed characteristic foliation, e.g. \cite{GirouxMassot}, implies the existence of a $\D^n\times[0,1]$-family of contactomorphisms $H^{k,t}\in\Cont(M,\xi)$, $(k,t)\in \D^n\times[0,1]$, such that $H^{k,0}=\Id$ and $H^{k,1}\circ \varphi^{k,1}\circ i=i$. Define
\[(\varphi^{k,t},F^{k,t}_s)=(H^{k,t-1}\circ \varphi^{k,1}, dH^{k,t-1}\circ F^{k,t}_s)\in \FCont(M,\xi), (k,t)\in \D^n\times[1,2] \]

Finally, we make the family contact in the complement of $i(\Sigma)$. Since we may arrange that $(\varphi^{k,2}, F^{k,2}_s)_{|\Op(i(\Sigma))}=\Id_{\Op(i(\Sigma))}$, the family $(\varphi^{k,2},F^{k,2}_s)\in \FCont(M,\xi)$ can be seen as a $\D^n$-family of formal contactomorphisms with compact support in the contact manifold $(M\backslash i(\Sigma), \xi)$. Moreover, since $i(\Sigma)$ is strongly overtwisted, the contact manifold $(M\backslash i(\Sigma),\xi)$ is overtwisted at infinity by Theorem \ref{thm:OTatInfinity}. Therefore, by Proposition \ref{prop:ContactomorphismsOTAtInfinity}, there is a homotopy $(\varphi^{k,t},F^{k,t}_s)\in \FCont(M,\xi)$, $(k,t)\in \D^n\times[2,3]$ such that
\begin{itemize}
    \item $(\varphi^{k,t},F^{k,t}_s)_{|\Op(i(\Sigma))}=\Id_{|\Op(i(\Sigma))}$ for $(k,t)\in \D^n\times[2,3]$, 
    \item $(\varphi^{k,t},F^{k,t}_s)=(\varphi^{k,t},d\varphi^{k,t})\in \Cont(M,\xi)$ for $(k,t)\in (\partial \D^n)\times[2,3]\cup \D^n \times\{3\}$.
\end{itemize}
This concludes the argument.
\end{proof}

\begin{proof}[Proof of Theorem \ref{thm:Main}]
    The proof follows from Theorem \ref{thm:MainContactomorphisms} and Lemma \ref{lem:Gray}.
\end{proof}

\section{Applications}\label{sec:Applications}

In this Section we prove Corollaries \ref{cor:OvertwistedDisks} and \ref{cor:StronglyLooseLegendrians} stated in the Introduction. 

\subsection{Proof of Corollary \ref{cor:OvertwistedDisks}}

We start with the proof of (i). Let $\D^2_{\OT,1},\D^2_{\OT,2}\subseteq (M,\xi)$ be two overtwisted disks in a strongly overtwisted contact $3$-manifold. To prove that both disks are contact isotopic it is enough to find a homotopy of overtwisted disks $\tilde{\D}^2_{\OT,t}$, $t\in[0,1]$, such that $\tilde{\D}^2_{\OT,0}=\D^2_{\OT,1}$ and $(M\backslash(\tilde{\D}^2_{\OT,1}\cup \D^2_{\OT,2}),\xi)$ is overtwisted \cite{Dymara:ContRelOT}.

Consider two smoothly isotopic strongly overtwisted surfaces $\Sigma_1,\Sigma_2\subseteq (M,\xi)$ so that $\Sigma_i\cap \D^2_{\OT,i}=\emptyset$ for $i\in\{1,2\}$. Fix $\varepsilon>0$ such that $U_{\varepsilon}(\Sigma_2)\cap \D^2_{\OT,2}=\emptyset$, here $U_\varepsilon(\Sigma_2)$ denotes an $\varepsilon$-neighborhood of $\Sigma_2$. By Theorem \ref{thm:H-PrincipleOvertwistedSurfaces} there is a contact isotopy $\varphi_t\in\Cont(M,\xi)$, $t\in[0,1]$, such that $\varphi_0=\Id$ and $\varphi_1(\Sigma_1)\subseteq U_{\varepsilon}(\Sigma_2)$. Then, $\tilde{\D}^2_{\OT,t}=\varphi_t(\D^2_{\OT,1})$, $t\in[0,1]$, is the required homotopy of overtwisted disks since $\varphi_1(\Sigma_1)\subseteq M\backslash (\tilde{\D}^2_{\OT,1}\cup \D^2_{\OT,2})$ is strongly overtwisted.

Let us prove (ii). Consider the commuting diagram 

\begin{displaymath} 
    \xymatrix@M=10pt{
    \Cont(M,\xi;\rel \D^2_{\OT})\  \ar@{^{(}->}[d]\ar@{^{(}->}[r]  & \Cont(M,\xi) \ar[r] \ar@{^{(}->}[d] & \Emb^{\OT}(\D^2,(M,\xi))   \ar[d] \\
    \Cont(M,\xi;\rel p) \ar@{^{(}->}[r] & \Cont(M,\xi) \ar[r] &  \CFr(M;\xi) }
\end{displaymath}

in which the rows are fibrations induced by post-composition maps. The first two columns are inclusions and the column at the right is given by the evaluation of the $1$-jet at the origin. Here, the subgroup $\Cont(M,\xi;\rel p)$ is composed by those contactomorphisms that fix $p$ and $\xi_p$. Theorem \ref{thm:MainContactomorphisms} implies that the left column is a weak homotopy equivalence so the result follows from the Five Lemma.  \qed

\subsection{Proof of Corollary \ref{cor:StronglyLooseLegendrians}} 
 
 We prove (i). Since both $L_0$ and $L_1$ are loose we may find overtwisted disks $\D^2_{\OT,i}\in M\backslash L_i$ for $i\in\{0,1\}$. Corollary \ref{cor:OvertwistedDisks} implies that there is a contact isotopy $\varphi_t\in\Cont(M,\xi)$, $t\in[0,1]$, such that $\varphi_0=\Id$ and $\varphi_1(\D^2_{\OT,0})=\D^2_{\OT,1}$. In particular, $L_0$ is Legendrian isotopic to the Legendrian $\tilde{L}_0=\varphi_1(L_0)$ which is in the complement of $\D^2_{\OT,1}$. Every formal isotopy between $\tilde{L}_0$ and $L_1$ can be assumed to avoid $\D^2_{\OT,1}$. Hence, the result follows from the $h$-principle for Legendrians which avoid a fixed overtwisted disk \cite{Dymara:LegendriansOT}, see also \cite[Theorem 7.19]{EliashbergCielebak}.
    
Finally, we explain (ii). Let $\gamma^k$, $k\in \D^n$, be a family of formal Legendrians that is Legendrian for $k\in \partial \D^n$. Fix the inclusion of a convex strongly overtwisted surface $i:\Sigma\hookrightarrow (M,\xi)$ such that $i(\Sigma)\cap\image(\gamma^0)=\emptyset$. After Giroux realization/flexibility and tightness criterion \cite{Giroux:Convexity,Giroux:Tightness}; we can find a disk $\D^2\subseteq \Sigma$ such that $i_{|\D^2}:\D^2\rightarrow(M,\xi)$ is an overtwisted disk. 
    
Let $\F$ be the characteristic foliation on $\Sigma$ determined by $di(T\Sigma)\cap \xi$. Since the parameter space is a disk, Theorem \ref{thm:H-PrincipleOvertwistedSurfaces} allows to find a family of germs of isocontact $\F$-embeddings $I^k:(\Sigma\times(-\varepsilon,\varepsilon),\xi_{\F})\rightarrow (M,\xi)$ such that $\image(I^k)\cap \image (\gamma^k)=\emptyset$ and $I^0=_{|\Sigma\times\{0\}}=i$.

In particular, $i^k_{|\D^2\times\{0\}}$, $k\in \D^n$, is an overtwisted disk embedding in the complement of $\gamma^k$ so the result follows from the parametric $h$-principle for Legendrians with an overtwisted disk in the complement \cite{CardonaPresas}. \qed

\bibliographystyle{plain}
\bibliography{main}
\end{document}